\documentclass{article}
\usepackage{graphicx} 

\usepackage{amssymb}
\usepackage{booktabs}
\usepackage{longtable}
\usepackage{amsfonts}
\usepackage{pdflscape}
\usepackage{amscd}
\usepackage{caption}
\usepackage{array}
\usepackage{fancyhdr}
\usepackage{amsmath, amssymb}
\usepackage{stmaryrd} 
\usepackage{bbm}      
\usepackage{mathtools}
\usepackage{color} 
\usepackage{tikz-cd}
\usepackage[arrow, matrix, curve]{xy}
\usepackage{extarrows}
\usepackage{amsfonts}
\usepackage{amsthm}
\usepackage{hyperref}
\usepackage{cleveref}
\usepackage{graphicx}
\usepackage{comment}
\usepackage{comment}
\usepackage{MnSymbol}
\usepackage[numbers]{natbib}
\newcommand{\bs}{\mathbf{s}}
\newcommand{\bd}{\mathbf{d}}

\newcommand{\be}{\mathbf{e}}
\newcommand{\bn}{\mathbf{n}}

\newcommand{\bk}{\mathbf{k}}
\newcommand{\bbf}{\mathbf{f}}
\newcommand{\bg}{\mathbf{g}}

\DeclareMathOperator{\supp}{supp}
\DeclareMathOperator{\Hom}{Hom}

\DeclareMathOperator{\Exp}{Exp}
\DeclareMathOperator{\vol}{vol}

\DeclareMathOperator{\Log}{Log}
\DeclareMathOperator{\ad}{ad}

\newtheorem{theorem}{Theorem}[section]
\newtheorem{corollary}[theorem]{Corollary}
\newtheorem{lemma}[theorem]{Lemma}
\newtheorem{proposition}[theorem]{Proposition}
\newtheorem{conjecture}[theorem]{Conjecture}

\newtheorem{definition}[theorem]{Definition}
\newtheorem{remark}[theorem]{Remark}
\title{A nilpotent version of the first Kac conjecture}
\author{Lydia Gösmann}
\date{}

\begin{document}

\maketitle

\begin{abstract}
  We derive a formula for the nilpotent Kac polynomial and define an explicit generalized Kac-Moody Lie algebra (by generators and relations) and conjecture that its graded dimensions recover the constant terms of the nilpotent Kac polynomial. We outline empirical evidence supporting the conjecture and prove it for two special classes of quivers: quivers without oriented cycles except loops and quivers with "negative enough" Euler form.
\end{abstract}

\section{Introduction}
For a dimension vector $\bd$ of a quiver $Q$ let $A_{\bd}$ be the Kac polynomial attached to Q and $\bd$. For a finite field $\mathbb{F}_q$ the integer $A_{\bd}(q)$ is equal to the number $A_{\bd}(\mathbb{F}_q)$ of isomorphism classes of absolutely indecomposable representations of $Q$ of dimension vector $\bd$ over $\mathbb{F}_q$. The polynomial $A_{\bd}(t) \in \mathbb{Z}[t]$ is monic and of degree $1-\langle \bd, \bd \rangle$ and has nonnegative coefficients. \\
Furthermore, one can also ask for the number of absolutely indecomposable nilpotent representations. This question is addressed in work of Bozec-Schiffmann-Vasserot \cite{pointsnilpotentvarities}. There they developed a formula for the nilpotent and 1-nilpotent Kac polynomial counting nilpotent or 1-nilpotent absolutely indecomposable representations, respectively. In particular, they prove that the constant term of the 1-nilpotent Kac polynomial is the dimension of a generalized Kac-Moody algebra in the corresponding degree. The question of a Lie algebra corresponding to the nilpotent Kac polynomial remained open. In this paper we derive an explicit formula for the nilpotent Kac polynomial and propose an explicit presentation (by generators and relations) of a Lie algebra $\mathfrak{g}_Q^{nil}$ and conjecture that its graded dimensions recover the constant terms of the nilpotent Kac polynomial. We give computational evidence, prove consistency with the generalized Kac–Moody case as well as the special case of quivers with negative enough Euler form. Moreover, we observe that the generators are governed by the Le Bruyn–Procesi classification of dimension vectors of simple-representation and we interpret the conjecture in the context of BPS theory. A proof of the conjecture will appear in a forthcoming joint paper with L. Hennecart.

\section*{Acknowledgments}
I would like to thank Ben Davison and Lucien Hennecart for explaining their paper \cite{BPS} and pointing out the connection to BPS theory. Moreover, I thank my supervisor Markus Reineke for introducing me to this topic and for constant support and encouragement. 
The work was supported by the DFG GRK 2240 “Algebro-Geometric Methods in Algebra, Arithmetic and Topology”.

\section{Nilpotent Kac polynomial}

$Q$ is a finite quiver with a set of vertices $Q_0$, a set of arrows $Q_1$ and arrows written $\alpha:i \rightarrow j$. We will consider representations of $Q$ of some dimension vector $\bd \in \mathbb{N}Q$. We denote by $I$ the set of dimension vectors $\bs_i$ having dimension $1$ only at vertex $i$ and $0$ everywhere else.
We recall that the (in general non-symmetric) Euler form is given by $$\langle \be, \bd\rangle =\sum_{i \in Q_0} e_id_i- \sum_{\alpha: i \rightarrow j} e_id_j~.$$ 
We denote the symmetrized Euler form by $$(\be,\bd)= \langle \be,\bd \rangle +\langle \bd,\be \rangle~.$$
For a fixed dimension vector $\bd$ we define the space of representations as 
$$R_{\bd}=R_{\bd}(Q)=\bigoplus_{\alpha: i \rightarrow j} \Hom(\mathbb{C}^{d_i}, \mathbb{C}^{d_j})$$
acted on by the base change group
$$G_{\bd} = \prod_{i \in Q_0} GL(\mathbb{C}^{d_i}).$$
Let $N_{\bd}$ be the nullcone of this action, that are all representations $V$ such that $0$ is contained in the closure of the orbit $G_{\bd} \cdot V$ or equivalently all operators $V_{\alpha_1} \circ \dots \circ V_{\alpha_s}$ for oriented cycles $\alpha_1 \cdots \alpha_s$ in Q are nilpotent \cite[section 1.4]{optimalfiltrations}.\\
We denote by $K_0(Var_{\mathbb{C}})$ the Grothendieck ring of complex varieties; the class of a variety $X$
therein is denoted by $[X]$.  We define the Pochhammer symbol $(q)_n = (1 - q) \cdots (1 - q^n)$ and a shorthand notation concerning dimension vectors 
\begin{align*}
 \left[{{\bf d}\atop{{\bf d}^1,\ldots,{\bf d}^s}}\right]_q = \prod_i \frac{(q)_{d_i}}{\prod^s_{k=1} (q)_{d_i^k}}   
\end{align*}
for $\bd = \bd^1 + \dots + \bd^s$.\\
We define $n_{\bd}(q) = |N_{\bd}(\mathbb{F}_q)|$ as the number of nilpotent representations over $\mathbb{F}_q$.

In our recent paper \cite{motivenullcone} we have developed several formulas for the motive of the nullcone of a quiver, in particular in Corollary 3.3. the formula
\begin{align} \label{motive nullcone}
[N_{\bf d}]=(-1)^{\dim {\bf d}}\cdot\mathbb{L}^{\sum_i{d_i\choose 2}}\cdot\sum_{{\bf d}^*}(-1)^s\cdot\mathbb{L}^{-\sum_{k<l}\langle{\bf d}^k,{\bf d}^l\rangle}\cdot\left[{{\bf d}\atop{{\bf d}^1,\ldots,{\bf d}^s}}\right]_\mathbb{L},
\end{align}
where the sum ranges over all ordered decompositions ${\bf d}={\bf d}^1+\ldots+{\bf d}^s$ of ${\bf d}$ into non-zero parts ${\bf d}^k$. 
\begin{corollary}
    We have
    \begin{align*}
     n_{\bd}(q) =(-1)^{\dim {\bf d}}\cdot q^{\sum_i{d_i\choose 2}}\cdot\sum_{{\bf d}^*}(-1)^s\cdot q^{-\sum_{k<l}\langle{\bf d}^k,{\bf d}^l\rangle}\cdot\left[{{\bf d}\atop{{\bf d}^1,\ldots,{\bf d}^s}}\right]_q 
    \end{align*}
    is a counting polynomial in $q$.
\end{corollary}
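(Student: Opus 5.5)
The plan is to transfer the motivic identity \eqref{motive nullcone} to point counts over finite fields. The natural tool is a counting measure. For each finite field $\mathbb{F}_q$ one has a ring homomorphism from (a suitable integral model of) the Grothendieck ring to $\mathbb{Z}$, sending $[X]\mapsto |X(\mathbb{F}_q)|$ and $\mathbb{L}\mapsto q$.

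The first issue is the target ring. The formula \eqref{motive nullcone} lives not in $K_0(Var_{\mathbb{C}})$ itself but in a localization where $\mathbb{L}$ and the factors $1-\mathbb{L}^n$ are inverted, because the quantum multinomials have $(\mathbb{L})_{d_i^k}$ in the denominator and negative powers of $\mathbb{L}$ appear. So I will first observe that the derivation in \cite{motivenullcone} is really a stratification argument. One stratifies $N_{\bd}$ by the Harder–Narasimhan type, or by the type of the canonical filtration witnessing nilpotency. Each stratum is then a quotient or fibration built from vector bundles and homogeneous spaces $G_{\bd}/P$. All ingredients ($N_{\bd}$, $G_{\bd}$, parabolics, the affine spaces of representations compatible with a flag) are defined over $\mathbb{Z}$.

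The key steps are as follows.
\begin{enumerate}
\item Spread $N_{\bd}$ out over $\mathbb{Z}$, or over a finitely generated ring. It is defined by the integral equations stating that all cycle operators are nilpotent; the orbit-closure condition is equivalent to this, by \cite[section 1.4]{optimalfiltrations}.
\item Repeat the same stratification over $\mathbb{F}_q$ to obtain an identity of point counts of the same shape. Each $\mathbb{L}$ becomes $q$, and $|GL_n(\mathbb{F}_q)|=(-1)^n q^{\binom n2}(q)_n$.
\item The factor $\left[{{\bf d}\atop{{\bf d}^1,\ldots,{\bf d}^s}}\right]_q$ then arises as $|G_{\bd}(\mathbb{F}_q)|/|P(\mathbb{F}_q)|$, up to the powers of $q$ and signs already displayed.
\end{enumerate}
Equivalently, one can apply the counting homomorphism directly. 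It extends to the localization $K_0(Var_{\mathbb{C}})[\mathbb{L}^{-1},(1-\mathbb{L}^n)^{-1}]\to\mathbb{Q}$ whenever the relevant classes are polynomial-count, since $q$ and $1-q^n$ are invertible in $\mathbb{Q}$. This gives the displayed formula for $n_{\bd}(q)$ for every prime power $q$.

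Next, I will show the right-hand side is a polynomial in $q$, not merely a rational function. Each term is a rational function in $q$ with denominators products of $(q)_m$ and powers of $q$. The sum $R(q)$ takes the integer value $n_{\bd}(q)$ at every prime power $q$; note that integer values at infinitely many points are not enough on their own to force polynomiality. Instead, polynomiality comes from the quantum multinomials. Each $\prod_i (q)_{d_i}/\prod_k (q)_{d_i^k}$ is a product of Gaussian multinomial coefficients, hence lies in $\mathbb{Z}[q]$. So the only possible denominators are powers of $q$, from the factor $q^{-\sum_{k<l}\langle\bd^k,\bd^l\rangle}$. Thus $R(q)\in\mathbb{Z}[q,q^{-1}]$.

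To rule out negative powers, I will use that $n_{\bd}(q)$ is an integer between $0$ and $q^{\dim R_{\bd}}$. A Laurent polynomial whose values are bounded by a polynomial as $q\to\infty$ could still contain negative powers. But a Laurent polynomial $R$ taking integer values at all prime powers has $q^N R(q)\in\mathbb{Z}[q]$ divisible by $q^N$ at every prime power. Take $q=p$ prime: then $p^N\mid p^NR(p)$ trivially, so this gives nothing directly. I will argue more cleanly instead: $q^N R(q)=P(q)$ with $P\in\mathbb{Z}[q]$, and $R(p)\in\mathbb{Z}$ forces $P(p)\equiv 0 \pmod{p^N}$. In particular $P(0)\equiv 0 \pmod p$ for all primes $p$, so $P(0)=0$; iterating on $P(q)/q$ removes all negative powers. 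Hence $R\in\mathbb{Z}[q]$, and $n_{\bd}$ is a counting polynomial.

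I expect the main obstacle to be the first step: justifying that the motivic identity specializes to point counts. This requires checking that the stratification of \cite{motivenullcone} is defined over $\mathbb{Z}$ and behaves identically over $\mathbb{F}_q$, with no characteristic-dependent issues, and that the localized Grothendieck ring admits the counting realization. I would handle it by redoing the fibration argument directly over $\mathbb{F}_q$, where each stratum count is an explicit product of $|GL|$'s and $q$-powers.
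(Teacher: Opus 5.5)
\textbf{Main step.} Your main route is the paper's route: rerun the stratification argument behind the motivic formula over $\mathbb{F}_q$ and compare point counts. However, the step you defer as ``the main obstacle'' is the entire content of the paper's proof.

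The paper settles it with one specific observation. In the derivation of \cite{motivenullcone} (proof of Theorem~2.2), characteristic $0$ is used only to upgrade a bijective morphism onto a stratum to an isomorphism. That upgrade is needed for an equality of classes in $K_0(Var_{\mathbb{C}})$, but not for counting, because a bijection on $\mathbb{F}_q$-points already gives equal cardinalities. So the count identity holds for every prime power $q$, including small characteristics.

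\textbf{The ``equivalently'' shortcut.} You should drop it, for three reasons.
\begin{itemize}
\item There is no point-counting homomorphism on $K_0(Var_{\mathbb{C}})$ or its localizations. Varieties over $\mathbb{Z}$ that become isomorphic over $\mathbb{C}$ can have different $\mathbb{F}_p$-counts, for example $x^2+y^2=1$ and $xy=1$.
\item Spreading out an identity proved over $\mathbb{C}$ gives the count identity at best for $q$ prime to some integer $N$, not for all $q$.
\item Making it conditional on the classes being polynomial-count assumes part of what is to be shown.
\end{itemize}

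\textbf{Polynomiality.} The paper does not spell this out. Your argument via $P(0)\equiv P(p)\equiv 0 \pmod p$ is correct but unnecessary. Since $\binom{d_i}{2}=\sum_k\binom{d_i^k}{2}+\sum_{k<l}d_i^k d_i^l$, the total exponent of $q$ in the summand indexed by $\bd^1,\ldots,\bd^s$ is
\[
\sum_{i}\binom{d_i}{2}-\sum_{k<l}\langle\bd^k,\bd^l\rangle=\sum_{i,k}\binom{d_i^k}{2}+\sum_{k<l}\ \sum_{\alpha:i\to j}d_i^k d_j^l\ \geq 0 .
\]
The quotient of Pochhammer symbols is a product of Gaussian multinomials, so every summand already lies in $\mathbb{Z}[q]$. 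This also shows that the right-hand side of the motivic formula is a polynomial in $\mathbb{L}$, contrary to your opening remark about negative powers.

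Your aside that integer values at infinitely many points cannot force polynomiality is also not right for rational functions. An element of $\mathbb{Q}(q)$ taking integer values at infinitely many integers is automatically in $\mathbb{Q}[q]$. Only the integrality of the coefficients needs an extra argument.
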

\begin{proof}
In the paper the formula is stated for characteristic $0$, but it is still valid as a point counting formula in $\mathbb{F}_q$ for the following reason: The assumption of characteristic $0$ is necessary in the proof of theorem 2.2 to conclude that the bijection of motives is indeed an isomorphism. Nevertheless, for counting points over a finite field, a bijection between the motives is sufficient, thus the formula above also provides an explicit expression for the counting polynomial.
\end{proof}
In \cite{pointsnilpotentvarities} the nilpotent Kac polynomial is expressed as
\begin{align*}
    \sum_{\bd} \vol(\underline{C}^{nil}_{\bd, \mathbb{F}_q}) z^{\bd} = \exp (\sum_{\bd} \sum_{l \geq 1} \frac{A^{nil}_{\bd}(q^l)}{q^l-1}z^{l\bd}/l) = \Exp (\sum_{\bd} \frac{A^{nil}_{\bd}(q)}{q-1}z^{\bd})
\end{align*}
and moreover 
\begin{align*}
    \vol(\underline{C}^{nil}_{\bd, \mathbb{F}_q}) = \sum_{\underline{\bd}} q^{o(\underline{\bd})} \prod_i \vol (\underline{C}_{\bd_i, \mathbb{F}_q}), 
\end{align*}
where the sum ranges over all tuples $\underline{\bd}= (\bd_1, \dots, \bd_s)$ with $\sum_i i\bd_i =\bd$, meaning over all partitions of $\bd$ and 
\begin{align*}
    o(\underline{\bd}) = - \sum_i (i-1) \langle\bd_i, \bd_i \rangle - \sum_{i < j }i (\bd_i, \bd_j) ~.
\end{align*}
Here, $\vol (\underline{C}_{\bd_i, \mathbb{F}_q})$ is the volume of the nilpotent representation stack with dimension vector $\bd_i$, which is the same as the number of nilpotent representaations over $\mathbb{F}_q$ divided by the number of elements in the group $GL_{\bd}$ over $\mathbb{F}_q$:
\begin{align}\label{volume nilp repres stack}
    \vol(\underline{C}^{nil}_{\bd, \mathbb{F}_q}) &= \sum_{\underline{\bd}} q^{o(\underline{\bd})} \prod_i \frac{n_{\bd_i}}{(-1)^n q^{\bd_i(\bd_i-1)/2}(q)_{\bd_i}}
\end{align}
Combining all these formulas and using the plethystic logarithm we get an explicit formula for the nilpotent Kac polynomial
\begin{align*}
  \sum_{\bd}  \frac{A_{\bd}^{nil}(q)}{q-1}z^{\bd} &= \Log (\sum_{\bd} \vol(\underline{C}^{nil}_{\bd, \mathbb{F}_q}) z^{\bd}) = \Log (\sum_{\bd} \sum_{\underline{\bd}} q^{o(\underline{\bd})} \prod_k \frac{n_{\bd_k}}{(-1)^n q^{\bd_k(\bd_k-1)/2}(q)_{\bd_k}} z^{\bd}) \\
  &= \psi^{-1} \circ \log (1+\sum_{\bd >0} \sum_{\underline{\bd}} q^{o(\underline{\bd})} \prod_k \frac{n_{\bd_k}}{(-1)^n q^{\bd_k(\bd_k-1)/2}(q)_{\bd_k}} z^{\bd}) \\
  &= \psi^{-1} (\sum_{i \geq 1} (-1)^{i-1} \frac{1}{i} (\sum_{\bd >0} \sum_{\underline{\bd}} q^{o(\underline{\bd})} \prod_k \frac{n_{\bd_k}}{(-1)^n q^{\bd_k(\bd_k-1)/2}(q)_{\bd_k}} z^{\bd})^i) \\
  &= \sum_{j\geq 1} \frac{\mu(j)}{j} \psi_j(\sum_{i \geq 1} (-1)^{i-1} \frac{1}{i} (\sum_{\bd >0} \sum_{\underline{\bd}} q^{o(\underline{\bd})} \prod_k \frac{n_{\bd_k}}{(-1)^n q^{\bd_k(\bd_k-1)/2}(q)_{\bd_k}} z^{\bd})^i)
  \end{align*}
  Replacing the motive of $N_{\bd}$ with formula \ref{motive nullcone} yields a formula
  \begin{align*}
  &\sum_{j\geq 1} \frac{\mu(j)}{j} \psi_j(\sum_{i \geq 1} (-1)^{i-1} \frac{1}{i} (\sum_{\bd >0} \sum_{\underline{\bd}} q^{o(\underline{\bd})} \cdot \\
  &\prod_k \frac{(-1)^{\dim {\bf d_k}}\cdot  q^{\sum_i{d_{k_i}\choose 2}}\cdot\sum_{\bf d_k^*}(-1)^s\cdot q^{-\sum_{m<l}\langle{\bf d_k}^m,{\bf d_k}^l\rangle}\cdot\left[{{\bf d_k}\atop{{\bf d_k}^1,\ldots,{\bf d_k}^s}}\right]_q}{(-1)^n q^{\bd_k(\bd_k-1)/2}(q)_{\bd_k}} z^{\bd})^i)~,
\end{align*}    
which implemented in Julia calculates the nilpotent Kac polynomial for a given quiver $Q$ and a given dimension vector $\bd$. The formula above is what the Julia implementation evaluates to produce the values against which conjecture 3.4 is tested.

\section{Conjecture: Nilpotent Kac-Moody algebra}

 \begin{definition} \label{def nilp Kac-moody}
For a quiver $Q$ the (positive part) of the nilpotent Kac-Moody algebra $\mathfrak{g}^{nil}_Q$ is given by the following generators:
\begin{itemize}
    \item $e_{\bs_i}$ for $\bs_i \in I$, if the vertex $i$ has no loops, and 
     \item $e_\bd$ for any dimension vector $\bd$ such that $\supp(\bd)$ is a strongly connected subquiver and $\langle \bd, \bs_i \rangle \leq 0$ and $\langle \bs_i, \bd \rangle \leq 0$ for all $1 \leq i \leq n$.
\end{itemize}
The generators are subject to the following relation:
\begin{itemize}
    \item $[e_{\bbf}, e_{\bg}] =0$ for every pair of generators with $(\bbf, \bg)=0$ and
   \item $(\ad e_{\bs_i})^{1-(\bs_i, \bbf)} e_{\bbf} = 0$, for $\bbf \neq \bs_i$, if $\bs_i \in I$ and the vertex $i$ has no loops.
\end{itemize}
 \end{definition}

\begin{remark}
The construction above defines (the positive part of) a Borcherds algebra \cite{genkacmoody}. The size of the corresponding Cartan square matrix is the number of generators of the above construction, that means all dimension vectors satisfying the conditions above, so it is typically infinite. Moreover, the entry $c_{\bd, \be}$ is the symmetrized Euler form $(\bd,\be)$. If $\bd \neq \be$ then $c_{\bd, \be} \leq 0$ and $c_{\bs_i, \bs_i}=2$, if $\bs_i \in I$ and the vertex $i$ has no loops. All other $c_{\bd, \bd}$ have an value $\leq 0$. A direct calculation shows, that the construction of the Borcherds algebra with this Cartan matrix, reconstruct exactly the above relations.
\end{remark}

A program in Julia has been developed counting the generators of the nilpotent Kac-Moody algebra in a given degree. Various verifications form the empirical basis of the following conjecture.  

\begin{conjecture}\label{conjecture}
   For any quiver $Q$ and any dimension vector $\bd$ we have
\begin{align*}
    \dim \mathfrak{g}_Q^{nil} [\bd] = A^{nil}_{\bd}(0) ~.
\end{align*} 
\end{conjecture}
To be more precise, here is a list of all quivers and dimension vectors, denoted by adjacency matrices, that have been tested: \\
\\
The following quivers with $1$ vertex were tested up to dimension vector/degree $(10)$:\\
$[0]$, $[1]$, $[2]$, $[3]$, $[4]$, $[5]$.\\
 
The following quivers with $2$ vertices were tested up to dimension vector/degree $(5,5)$:\\
$[1\,1;\,1\,0]$, $[0\,2;\,2\,0]$, $[1\,2;\,1\,0]$, $[3\,2;\,1\,0]$, $[1\,2;\,2\,0]$, $[1\,2;\,3\,0]$, $[0\,2;\,1\,0]$, $[0\,1;\,1\,0]$, $[0\,1;\,0\,0]$, $[0\,3;\,1\,0]$, $[1\,2;\,2\,1]$, $[0\,3;\,2\,0]$, $[0\,3;\,3\,0]$, $[1\,1;\,0\,1]$, $[1\,1;\,1\,1]$, $[2\,1;\,1\,2]$, $[1\,1;\,0\,0]$, $[1\,0;\,0\,1]$, $[2\,1;\,1\,1]$.\\
 \\
The following quivers with $3$ vertices have been tested up to dimension vector/degree $(3,3,3)$:\\
$[1\,1\,0;\,0\,1\,1;\,0\,0\,1]$, $[1\,1\,0;\,0\,1\,1;\,0\,1\,1]$, $[1\,1\,0;\,1\,1\,1;\,0\,1\,1]$, $[1\,1\,1;\,1\,1\,1;\,0\,1\,1]$, \\
$[1\,1\,1;\,1\,1\,1;\,1\,1\,1]$, $[2\,1\,0;\,0\,2\,1;\,0\,0\,2]$, $[2\,1\,0;\,0\,2\,1;\,0\,1\,2]$, $[2\,1\,0;\,1\,2\,1;\,0\,1\,2]$,\\
$[2\,1\,1;\,1\,2\,1;\,1\,1\,2]$, $[2\,1\,0;\,1\,1\,1;\,0\,1\,2]$, $[2\,1\,0;\,1\,2\,1;\,0\,1\,1]$, $[1\,1\,0;\,0\,2\,1;\,0\,1\,1]$,\\
$[1\,1\,0;\,0\,1\,1;\,0\,1\,2]$, $[1\,1\,0;\,1\,2\,1;\,0\,1\,1]$.\\
 \\
The following quivers with $4$ vertices have been tested up to dimension vector/degree $(2,2,2,2)$:\\
$[1\,1\,0\,0;\,0\,1\,1\,0;\,0\,0\,1\,1;\,0\,0\,0\,1]$,
$[1\,1\,0\,0;\,1\,1\,1\,0;\,0\,0\,1\,1;\,0\,0\,0\,1]$,\\
$[1\,1\,0\,0;\,1\,1\,1\,0;\,0\,1\,1\,1;\,0\,0\,0\,1]$, 
$[1\,1\,0\,0;\,1\,1\,1\,0;\,0\,1\,1\,1;\,0\,0\,1\,1]$, \\
$[1\,1\,0\,0;\,1\,1\,0\,0;\,0\,0\,1\,1;\,0\,0\,1\,1]$, 
$[1\,1\,0\,0;\,1\,1\,1\,0;\,0\,0\,1\,1;\,0\,0\,1\,1]$, \\
$[0\,1\,0\,0;\,1\,0\,0\,0;\,0\,0\,0\,1;\,0\,0\,1\,0]$, 
$[0\,1\,0\,0;\,1\,0\,1\,0;\,0\,0\,0\,1;\,0\,0\,1\,0]$, \\
$[0\,1\,0\,0;\,1\,0\,1\,0;\,0\,1\,0\,1;\,0\,0\,1\,0]$.\\
 \\
The following quiver with $5$ vertices has been tested up to dimension vector/degree $(1,1,1,1,1)$:\\
$[0\,1\,0\,0\,0;\,0\,0\,1\,0\,0;\,1\,0\,0\,0\,0;\,0\,0\,0\,0\,1;\,0\,0\,0\,1\,0]$.\\
\\
The corresponding values are attached in the appendix.
 
\section{Special cases}
\subsection{Comparison to generalized Kac-Moody algebra}

We recall the definition of the generalized Kac-Moody algebra from \cite{pointsnilpotentvarities}:
 \begin{definition} \label{def gen KM from BSV}
     The (positive part) of the generalized Kac-Moody algebra is given by the following generators 
\begin{itemize}
      \item $e_{\bs_i}$ for $\bs_i \in I$, if the vertex $i$ has no loops and
     \item $e_{\bk_i}$ for $\bk_i \in K_i$,if the vertex $i$ has at least one loop.
     \end{itemize} 
     The generators are subject to the following relation:
\begin{itemize}
    \item $[e_{\bbf}, e_{\bg}] =0$ for every pair of generators with $(\bbf, \bg)=0$ and
    \item $(\ad e_{\bs_i})^{1-(\bs_i, \bbf)} e_{\bbf} = 0$ for $\bbf \neq \bs_i$, if $\bs_i \in I$ and the vertex $i$ has no loops.
\end{itemize}
 \end{definition}

Since the notion of 1-nilpotency and nilpotency coincide, if the quiver has no oriented cycles except loops, we have the following propostion as a consistency check.

\begin{proposition}
    The construction from Definition \ref{def gen KM from BSV} and Definition \ref{def nilp Kac-moody} coincide if the quiver $Q$ has no oriented cycles except loops.
\end{proposition}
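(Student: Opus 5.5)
The plan is to compare the two generating sets directly. The relations in Definition \ref{def gen KM from BSV} and Definition \ref{def nilp Kac-moody} are given by literally the same formulas in terms of the symmetrized Euler form. So the two algebras coincide as soon as the index sets of generators coincide, with each generator carrying the same degree. Here I use that in \cite{pointsnilpotentvarities} the set $K_i$ for a vertex $i$ with at least one loop is $K_i=\{l\bs_i : l\geq 1\}$.

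\emph{Step 1: supports.} If $Q$ has no oriented cycles except loops, then every strongly connected subquiver consists of a single vertex. Indeed, two distinct vertices $i\neq j$ in a strongly connected subquiver would be joined by oriented paths $i\to\cdots\to j$ and $j\to\cdots\to i$, whose concatenation is an oriented cycle that is not a loop. Hence every $\bd$ in the second bullet of Definition \ref{def nilp Kac-moody} is of the form $\bd=l\bs_i$ with $l\geq 1$.

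\emph{Step 2: the inequalities.} For $\bd=l\bs_i$ and $j\neq i$ we have $\langle l\bs_i,\bs_j\rangle=-l\cdot\#\{\alpha:i\to j\}\leq 0$ and similarly $\langle \bs_j,l\bs_i\rangle\leq 0$, so these conditions hold automatically. For $j=i$ we get $\langle l\bs_i,\bs_i\rangle=l(1-g_i)$, where $g_i$ is the number of loops at $i$. This is $\leq 0$ if and only if $g_i\geq 1$. Therefore the second bullet contributes exactly the vectors $l\bs_i$, $l\geq 1$, for the vertices $i$ carrying at least one loop, and contributes nothing at loop-free vertices (there even $\bs_i$ itself fails, since $\langle\bs_i,\bs_i\rangle=1$). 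The first bullet contributes $\bs_i$ for loop-free $i$, and the two bullets do not overlap. So the generators of $\mathfrak g^{nil}_Q$ are $\{e_{\bs_i}: i \text{ loop-free}\}\cup\{e_{l\bs_i}: i\text{ has a loop},\ l\geq1\}$, which is precisely the generating set of Definition \ref{def gen KM from BSV}.

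\emph{Step 3: relations.} Since the generator sets and their degrees agree, the Serre-type relations $(\ad e_{\bs_i})^{1-(\bs_i,\bbf)}e_{\bbf}=0$ and the commutation relations $[e_\bbf,e_\bg]=0$ for $(\bbf,\bg)=0$ range over the same pairs in both definitions. Equivalently, the Borcherds–Cartan matrices $c_{\bd,\be}=(\bd,\be)$ coincide. Hence the two algebras are equal.

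The only point I expect to require care is Step 1's conventions together with the precise form of $K_i$ in \cite{pointsnilpotentvarities}. If $K_i$ were defined there with multiplicities (e.g.\ several copies of $l\bs_i$ when $g_i\geq 2$), the comparison would fail as stated, so I would first check that definition and confirm that it is the set $\{l\bs_i\}_{l\geq 1}$ without multiplicities.
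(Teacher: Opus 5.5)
Your proof is correct and follows the paper's argument. Strongly connected supports are single vertices, and the Euler-form inequalities single out exactly the multiples $l\bs_i$ at looped vertices, so the generator sets, and hence the relations, agree. Your explicit check at $j\neq i$ and your caveat about $K_i=\{l\bs_i\}_{l\geq 1}$ fill in details the paper leaves implicit; BSV do index the imaginary generators with one generator per $l$, and this is consistent with the one-vertex columns of the tables.
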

\begin{proof}
    Both constructions have the generators $e_{\bs_i}$ for all $\bs_i \in I$. Moreover, since $Q$ has no oriented cycles except loops, $\supp(\bd)$ is only a strongly connected subquiver, if $\bd$ has only one nonzero entry. If this entry is at vertex $i$ and the vertex $i$ has at least one loop, we have $\langle \bd, \bs_i \rangle \leq 0$ and $\langle \bs_i, \bd \rangle \leq 0$ for all $\bs_i \in I$. If this entry is at vertex $j$ and vertex $j$ has no loops, then $\langle \bd, \bs_j \rangle > 0$ and $\langle \bs_j, \bd \rangle > 0$. This shows that $\mathfrak{g}_Q^{nil}$ has the same generators as the generalized Kac-Moody algebra $\mathfrak{g}_Q$, if $Q$ has no oriented cycles except loops. Since both algebras have also the same relations, they coincide.
\end{proof}

\subsection{Negative Enough Euler form}
 There is another special case, where we can prove the conjecture directly. We say a quiver has a negative enough Eulfer form, if $\langle \bd, \be \rangle \leq 0$, with equality only if $\bd=0$ or $\be=0.$ This means that each vertex has at least two loops, and that there is an arrow from $i$ to $j$ for every vertices $i$ and $j$. In this situation the construction of the nilpotent Kac-Moody algebra consists of generators for every dimension vector, since every subquiver is strongly connected and the Euler form is always negative for non-zero dimension vectors. Moreover, there are also no relations between the generators. So the dimension of the nilpotent Kac-algebra is given by multidimensional Lyndon words counts as well as the constant term of the nilpotent Kac-polynmial as the following lemma proves.

 \begin{lemma}
     If the Euler form of the quiver $Q$ is negative enough, we have an identity
     \begin{align*}
         \sum_{d \neq 0} A^{nil}_d(0) z^{\bd} =  \sum_{\bd \neq 0} \sum_{e | \gcd (d_1, \dots, d_r} \frac{\mu(e)}{e} \sum_{k \geq 1} \frac{1}{k} \sum^k_{j=1} (-1)^{k-j} \binom{k}{j} \prod_i \binom{\frac{d_i}{e}+j-1}{j-1} z^{\bd},
     \end{align*}
     where $z^{\bd} = z_1^{d_1} \cdots z_r^{d_r}$.
 \end{lemma}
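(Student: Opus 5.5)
The plan is to evaluate the generating series $\sum_{\bd}\vol(\underline{C}^{nil}_{\bd,\mathbb{F}_q})z^{\bd}$ at $q=0$, and then take the plethystic logarithm.

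First I will rewrite the right-hand side of the lemma in closed form. The coefficient $\prod_i\binom{d_i+j-1}{j-1}$ counts ordered decompositions $\bd=\bd^1+\dots+\bd^j$ into $j$ parts, where zero parts are allowed. Inclusion–exclusion, $\sum_{j}(-1)^{k-j}\binom{k}{j}(\cdot)$, turns this into $c_k(\bd)$, the number of ordered decompositions of $\bd$ into exactly $k$ non-zero parts. Put $F=\sum_{\bd\neq 0}z^{\bd}$. Then $\sum_{\bd}\sum_k \frac{1}{k}c_k(\bd)z^{\bd}=\sum_k F^k/k=-\log(1-F)$. Applying the Möbius sum $\sum_e\frac{\mu(e)}{e}\psi_e$ with $\psi_e(z^{\bd})=z^{e\bd}$ (this is exactly the rescaling $\bd\mapsto\bd/e$ in the formula) gives the identification
\begin{align*}
\text{RHS}=\Log\Bigl(\frac{1}{1-F}\Bigr)=-\Log(1-F).
\end{align*}

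Next I will show that the left-hand side equals the same series. By the formula from \cite{pointsnilpotentvarities}, $\Log\bigl(\sum_{\bd}\vol(\underline{C}^{nil}_{\bd,\mathbb{F}_q})z^{\bd}\bigr)=\sum_{\bd}\frac{A^{nil}_{\bd}(q)}{q-1}z^{\bd}$. Setting $q=0$ turns the right side into $-\sum_{\bd}A^{nil}_{\bd}(0)z^{\bd}$. The specialization commutes with $\Log$, because the Adams operations send $q\mapsto q^e$ and $0^e=0$, provided every coefficient of the volume series is a power series in $q$ that is regular at $0$. It therefore suffices to prove
\begin{align*}
\vol(\underline{C}^{nil}_{\bd,\mathbb{F}_q})\big|_{q=0}=-1 \quad\text{for all } \bd\neq 0,
\end{align*}
since then the volume series at $q=0$ is $1-F$, and the two sides agree.

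To compute this value, I combine the Corollary with $|GL_{\bd}(\mathbb{F}_q)|=(-1)^{\dim\bd}q^{\sum_i\binom{d_i}{2}}\prod_i(q)_{d_i}$. The sign and the power of $q$ cancel, and the $(q)_{d_i}$ cancel against the $q$-multinomial. This gives
\begin{align*}
\frac{n_{\bd}}{|GL_{\bd}|}=\sum_{\bd^*}(-1)^s q^{-\sum_{k<l}\langle\bd^k,\bd^l\rangle}\prod_{k}\prod_i\frac{1}{(q)_{d^k_i}}.
\end{align*}
This expression lies in $\mathbb{Z}[[q]]$. Because the Euler form is negative enough, each exponent $-\langle\bd^k,\bd^l\rangle$ is strictly positive. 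Hence only the term with $s=1$ survives at $q=0$, and its value is $-1$.

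In \eqref{volume nilp repres stack} the exponent is $o(\underline{\bd})=-\sum_i(i-1)\langle\bd_i,\bd_i\rangle-\sum_{i<j}i(\bd_i,\bd_j)$. Each summand is nonnegative, and the whole exponent is strictly positive unless the tuple consists of $\bd_1=\bd$ alone. Otherwise either some $\bd_i\neq0$ with $i\ge2$ contributes $-(i-1)\langle\bd_i,\bd_i\rangle>0$, or two nonzero parts contribute a strictly positive term $-i(\bd_i,\bd_j)$. So at $q=0$ only the trivial tuple survives, and it contributes $-1$.

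The main obstacle is the bookkeeping of signs and normalizations. Specifically, the convention $(-1)^n q^{\bd(\bd-1)/2}(q)_{\bd}$ must match $|GL_{\bd}|$ exactly, so that the ratio is indeed a power series with constant term $-1$ and not, say, $\pm q^{-m}$. I will check this carefully on the one-vertex case, and I will also verify that passing to $q=0$ is legitimate inside the plethystic $\Log$.
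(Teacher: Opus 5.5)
Your proposal is correct and follows the paper's argument: specialize the volume series at $q=0$, where only the trivial tuple in \eqref{volume nilp repres stack} and the $s=1$ term of the nullcone formula survive, which gives $1-\sum_{\bd\neq 0}z^{\bd}$; then take $\Log$ and expand the right-hand side. The differences are minor. You read off $n_{\bd}/|GL_{\bd}|\big|_{q=0}=-1$ directly from the explicit Corollary, whereas the paper uses the twisted identity $\sum_{\bd}\frac{[N_{\bd}]}{[G_{\bd}]}t^{\bd}\cdot\sum_{\be}\frac{t^{\be}}{(\mathbb{L})_{\be}}=1$, of which that formula is the inverted expansion. You identify the right-hand side by inclusion--exclusion rather than the binomial theorem, and you add a justification the paper omits, namely that evaluation at $q=0$ commutes with $\Log$.
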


 \begin{proof}
   We start with the identity 
   \begin{align*}
       \sum_{\bd} \frac{[N_{\bd}]}{[G_{\bd}]} t^{\bd} \cdot \sum_{\be} \frac{t^{\be}}{(\mathbb{L)_{\be}}}=1
   \end{align*}
   in $\mathbb{M}_{\mathbb{L}}[[Q_0]]$ from \cite{motivenullcone}, where $\mathbb{M}_{\mathbb{L}} [[Q_0]]$ is the twisted formal power series ring with topological $\mathbb{M}$-baasis elements $t^{\bd}$ for $\bd \in \mathbb{N}Q_0$ and multiplication $t^{\bd} \cdot t^{\be} = \mathbb{L}^{-\langle \bd, \be \rangle} \cdot t^{d+e}$. For $\mathbb{L} =0$ the only terms surviving correspond to $\bd=0$ or $\be =0$, since the Euler form is negative enough. After comparing the coefficients we get:
   \begin{align*}
       \frac{[N_{\bd}]}{[G_{\bd}]} = - \frac{1}{(\mathbb{L})_{\bd}} = -1
   \end{align*}
   for $\bd \neq 0$ and $ \frac{[N_{\bd}]}{[G_{\bd}]}=0$ for $\bd =0$.
   Now we consider the formula for the volume of the nilpotent representation stack \ref{volume nilp repres stack} and determine when the exponent of $q$ is zero. This is exactly the case if the partition $\underline{\bd}$ of $\bd$ contains only one entry: $\bd$ itself. So altogether
   \begin{align*}
       \sum_{\bd \geq 0} \vol(\underline{C}^{nil}_{d,  \mathbb{F}_q})_{|0} z^{\bd} &= 1 + \sum_{d > 0} \frac{[N_{\bd}]}{[G_{\bd}]} z^{\bd} = 1 - \sum_{d >0} z^{\bd} = 1 - (\prod_i \frac{1}{1-z_i}-1) \\
       &= 2 - \prod_i \frac{1}{1-z_i} = \frac{2 \prod_i (1-z_i)-1}{\prod_i (1-z_i)}~.
   \end{align*}
   Since 
   \begin{align*}
       \sum_{\bd \neq 0}  \vol(\underline{C}^{nil}_{d,  \mathbb{F}_q})_{|0} z^{\bd} = \Exp (\sum_{\bd} \frac{A^{nil}_d(0)}{q-1}z^{\bd}) = \Exp  (-\sum_{\bd \neq 0} A^{nil}_d(0) \cdot z^{\bd})
   \end{align*}
   we get 
   \begin{align*}
     \sum_{\bd \neq 0} A_{\bd}^{nil}(0) z^{\bd} = \Log (\frac{\prod_i (1-z_i)}{2 \prod_i (1-z_i)-1})~.
   \end{align*}
The right hand side coincides with the formula for the multidimensional Lyndon words counts, since
\begin{align*}
    \Log (\frac{\prod_i (1-z_i)}{2 \prod_i (1-z_i)-1}) = \psi^{-1}  \log (\frac{\prod_i (1-z_i)}{2 \prod_i (1-z_i)-1})) = \psi^{-1} (- \log (2- \prod_i \frac{1}{1-z_i}))~,
\end{align*}
where we divide numerator and denominator by $\prod_i (1-z_i)$ in the last step.
Now we use the identity $\sum_{k \geq 1} \frac{u^k}{k} = -\log (1-u)$ for $u= \prod_i \frac{1}{1 - z_i} -1$:
\begin{align*}
    \psi^{-1} (- \log (2- \prod_i \frac{1}{1-z_i})) = \psi^{-1} (\sum_{k \geq 1} \frac{1}{k} (\prod_i \frac{1}{1 - z_i} -1)^k) ~.
\end{align*}
Next, we use the binomial theorem $\sum^k_{j=0} (-1)^{k-j} \binom{k}{j} y^j = (y-1)^k$ with $y=\prod_i \frac{1}{1 - z_i}$:
\begin{align*}
     \psi^{-1} (\sum_{k \geq 1} \frac{1}{k} (\prod_i \frac{1}{1 - z_i} -1)^k)  = \psi^{-1}(\sum_{k \geq 1} \frac{1}{k} \sum^k_{j=0} (-1)^{k-j} \binom{k}{j} \prod_i \frac{1}{(1 - z_i)^j}) ~.
\end{align*}
Furthermore,
\begin{align*}
    \sum^k_{j=0} (-1)^{k-j} \binom{k}{j} \prod_i \frac{1}{(1 - z_i)^j}) &=  \sum^k_{j=1} (-1)^{k-j} \binom{k}{j} \prod_i \frac{1}{(1 - z_i)^j}) + (-1)^k \\
    &= \sum^k_{j=0} (-1)^{k-j} \binom{k}{j} \prod_i \frac{1}{(1 - z_i)^j}) - \sum_{j=0}^k (-1)^{k-j} \binom{k}{j} \\
    &= \sum^k_{j=0} (-1)^{k-j} \binom{k}{j} (\prod_i \frac{1}{(1 - z_i)^j})-1) \\
    &= \sum^k_{j=0} (-1)^{k-j} \binom{k}{j} \sum_{ \bd \neq 0} \prod_i \binom{d_i+j-1}{j-1} z^{\bd}\\
    &= \sum_{\bd \neq 0} \sum^k_{j=1} (-1)^{k-j} \binom{k}{j} \prod_i \binom{d_i +j -1}{j-1} z^{\bd}  ~.
\end{align*}
This yields
\begin{align*}
   & \psi^{-1}(\sum_{k \geq 1} \frac{1}{k} \sum^k_{j=0} (-1)^{k-j} \binom{k}{j} \prod_i \frac{1}{(1 - z_i)^j}) \\
    &= \psi^{-1}(\sum_{\bd\neq 0}\sum_{k \geq 1} \frac{1}{k} \sum^k_{j=1} (-1)^{k-j} \binom{k}{j} \prod_i \binom{d_i+j-1}{j-1} z^{\bd})\\
    &= \sum_{\bd \neq 0} \psi^{-1}(\sum_{k \geq 1} \frac{1}{k} \sum^k_{j=1} (-1)^{k-j} \binom{k}{j} \prod_i \binom{d_i+j-1}{j-1} z^{\bd}) \\
    &= \sum_{\bn \neq 0} \sum_{e | \gcd (n_1, \dots, n_r} \frac{\mu(e)}{e} \sum_{k \geq 1} \frac{1}{k} \sum^k_{j=1} (-1)^{k-j} \binom{k}{j} \prod_i \binom{\frac{n_i}{e}+j-1}{j-1} z^{\bn} ~.
\end{align*}

 \end{proof}

\begin{remark}
    For Corollary 3.5. in \cite{motivenullcone} the condition that the Euler form is negative enough is also necessary.  
\end{remark}

\section{Dimension vectors for simple representations}
LeBruyn and Procesi \cite{simplerepresentation} have characterized dimension vectors of simple representations with the following theorem and since the above construction bears a close resemblance to their description, it is worth comparing the two in detail.

\begin{theorem}
$\bd \in \mathbb{N}^n$ is the dimension vector of a simple representation of the
quiver $Q$ if and only if 
\begin{itemize}
    \item $\bd \in I$ and the vertex $i$ has no loops or
    \item $\supp(\bd)$ is the extended Dynkin diagram $\tilde{A}_{n-1}$ and $\bd_{|\supp(\bd)} = (1, ... , 1)$  or
    \item $\supp(\bd)$ is a noncyclic strongly connected subquiver and $\langle \bd, \bs_i \rangle \leq 0$ and $\langle \bs_i, \bd \rangle \leq 0$ for all $1 \leq i \leq n$.
\end{itemize}
\end{theorem}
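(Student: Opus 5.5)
We prove the two directions of the Le Bruyn--Procesi criterion separately, reducing throughout to the case where $\supp(\bd)$ is all of $Q$. This reduction is harmless: a simple representation $V$ of dimension $\bd$ is a simple representation of the full subquiver on $\supp(\bd)$. Conversely, a simple representation of that subquiver, with all other arrows set to zero, is simple for $Q$.

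\emph{Necessity.} Let $V$ be simple of dimension $\bd$.

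First, $\supp(\bd)$ is strongly connected. For a vertex $i$ in the support, the subspace spanned by the images of $V_i$ under all paths starting at $i$ is a nonzero subrepresentation. It must therefore be all of $V$, so every vertex of the support is reached from $i$ by a path. Hence $\supp(\bd)$ is strongly connected.

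Second, the inequalities. Suppose $\bd\neq \bs_i$ for a loop-free $i$, and $\supp(\bd)$ is not of cyclic type $\tilde A$ with $\bd=(1,\dots,1)$. We must show $\langle \bs_i,\bd\rangle\le 0$ and $\langle \bd,\bs_i\rangle\le 0$. Now $\langle \bs_i,\bd\rangle = d_i-\sum_{\alpha:i\to j} d_j$. If it were positive, the map $\bigoplus_{\alpha:i\to j}V_\alpha : V_i\to \bigoplus_j V_j$ would have a nonzero kernel. Any kernel vector spans a subrepresentation of dimension $\bs_i$. By simplicity this forces $\bd=\bs_i$, and then $i$ has no loops (a loop would have to act by $0$, and by strong connectivity the support is just $\{i\}$; one has to treat loops at $i$ carefully here, since a loop at $i$ contributes $-d_i$ to the sum). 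The dual argument, using the cokernel of $\bigoplus_j V_j\to V_i$ and a quotient of dimension $\bs_i$, handles $\langle \bd,\bs_i\rangle$.

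Finally, if the support is a plain oriented cycle, the quiver algebra is the path algebra of a cycle. Its simple representations with nonzero cycle action have dimension $(1,\dots,1)$, and nilpotent ones have dimension $\bs_i$. This identifies the second case and shows the cyclic support is excluded from the third.

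\emph{Sufficiency.} The first two cases are immediate: $\bs_i$ at a loop-free vertex is trivially simple, and the cycle with all arrows nonzero scalars is simple. The main obstacle is the third case: a non-cyclic strongly connected support satisfying the inequalities. Here I would show that the generic representation in $R_\bd$ is simple.

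The set of representations admitting a subrepresentation of dimension $\be$, for $0<\be<\bd$, is the image of a vector bundle over $\prod_i \mathrm{Gr}(e_i,d_i)$. Its dimension is at most $\dim R_\bd - \langle \be,\bd-\be\rangle$. So it suffices to prove $\langle \be,\bd-\be\rangle>0$ fails never, i.e.\ that $\langle \be,\bd-\be\rangle<0$ for every such $\be$, or to use Schofield's criterion that $\be$ is not a generic subdimension vector.

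I expect this inequality to be the hard part. I would first try to deduce it from the vertexwise conditions $\langle \bd,\bs_i\rangle\le0$ and $\langle\bs_i,\bd\rangle\le0$, summed with weights $e_i$. These give $\langle \be,\bd\rangle\le 0$ and $\langle \bd,\be\rangle\le0$. Subtracting then bounds $\langle\be,\bd-\be\rangle$ in terms of $-\langle \be,\be\rangle$. Strong connectivity and the non-cyclic hypothesis (some vertex with two outgoing/incoming arrows, or a loop) are needed to get the strict inequality. If the direct estimate fails, I would fall back on Le Bruyn--Procesi's original inductive argument. That argument works via reflection-like reductions, decreasing $\bd$ by removing a vertex where an inequality is strict, together with the fact that the simple locus is open, so nonemptiness suffices.
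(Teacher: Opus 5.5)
The paper gives no proof of this theorem; it quotes it from Le Bruyn--Procesi. Your proposal therefore has to stand on its own. The necessity half is essentially fine. The loop issue is settled cleanly by $\langle \bs_i,\bs_i\rangle = 1-\#\{\text{loops at } i\}$, which is positive only at loop-free vertices. The gap is in sufficiency for the third condition, which is the real content of the theorem.

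\emph{The key inequality is false.} First a correction: the incidence variety of pairs (an $\be$-dimensional graded subspace, a representation preserving it) has dimension $\dim R_\bd+\langle\be,\bd-\be\rangle$, not $\dim R_\bd-\langle\be,\bd-\be\rangle$. So your plan needs $\langle\be,\bd-\be\rangle<0$ for all $0<\be<\bd$, and this fails under the hypotheses. Take vertices $h,a,b$ with one arrow $a\to h$, five arrows $h\to a$, one arrow $h\to b$ and five arrows $b\to h$, and let $\bd=(d_h,d_a,d_b)=(6,1,1)$.
\begin{itemize}
\item The support is strongly connected and noncyclic, $\langle\bs_h,\bd\rangle=\langle\bd,\bs_h\rangle=0$, and the other four values are $-5$ or $-29$.
\item For $\be=(3,1,0)$ one gets $\langle\be,\bd-\be\rangle=9-3-3=3>0$.
\item Your weighted-sum estimate only gives $\langle\be,\bd-\be\rangle\le\min(-\langle\be,\be\rangle,-\langle\bd-\be,\bd-\be\rangle)=8$.
\end{itemize}
The generic representation of dimension $(6,1,1)$ is nevertheless simple. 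For instance, a subrepresentation of dimension $(3,1,0)$ would force the vector given by $a\to h$ into the kernel of the functional given by $h\to b$, which fails generically. The incidence map simply has positive-dimensional fibres: generically $\mathrm{hom}(\be,\bd-\be)=4$ and $\mathrm{ext}(\be,\bd-\be)=1$. So no Euler-form dimension count can finish the argument.

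\emph{The fallback supplies no construction.} Openness of the simple locus does not by itself produce a simple, and your proposal never actually constructs one. The standard tool is the local quiver. Suppose $V,S$ are non-isomorphic simples of dimensions $\bd',\bd''$ with $\bd'+\bd''=\bd$, $\langle\bd',\bd''\rangle<0$ and $\langle\bd'',\bd'\rangle<0$. Then the local quiver of $V\oplus S$ has arrows in both directions between its two vertices, so it has simples of dimension $(1,1)$. Luna's \'etale slice theorem then yields a simple of dimension $\bd$ for $Q$.

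On top of this you need a combinatorial argument that a suitable decomposition always exists, or a direct construction in the base cases. This step is not automatic. In the example above, subtracting $\bs_a$, where both inequalities are strict, gives $(6,0,1)$ with $\langle\bs_h,(6,0,1)\rangle=5>0$. So your suggested reduction ``remove a vertex where an inequality is strict'' can destroy the hypotheses at neighbouring vertices.
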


 \begin{remark}
 \begin{enumerate}
     \item Noncyclic here means that $\supp(\bd)$ is not the extended Dynkin diagram $\tilde{A}_{n-1}$, so $Q$ has at least $n+1$ arrows and strongly connected means for every two vertices $i$ and $j$ there is a directed path from $i$ to $j$ and $j$ to $i$.
     \item Note that LeBruyn and Procesi \cite{simplerepresentation} omit the dimension vectors $\bd \in I$, where $i$ is a vertex without loop in their theorem,  but they are essential to obtain a complete list of all dimension vectors of simple representations. Moreover, LeBruyn and Procesi use the notation of $\tilde{A}_n$ for the extended Dynkin diagram with $n$ vertices. This has been exchanged to $\tilde{A}_{n-1}$ to align with the standard notation of extended Dynkin diagrams.  
 \end{enumerate}
 \end{remark}

\begin{definition}
    We call the dimension vectors $\bd$ such that $\bd \in I$ and the vertex $i$ has no loops real simples and all the other dimension vectors of a simple representation imaginary simples. 
\end{definition}

\begin{lemma}
    The dimension vectors given rise to a generator in the construction of the nilpotent Kac-Moody algebra are exactly the real simples and all multiples of the imaginary simples.  
\end{lemma}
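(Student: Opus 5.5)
We compare the generator set $S$ of Definition \ref{def nilp Kac-moody} with the Le Bruyn--Procesi list and prove both inclusions. Let $R$ be the set of real simples and $M$ the set of positive multiples of imaginary simples. The key observation is that the conditions defining the second kind of generator are stable under positive scaling. The support is unchanged, $\supp(m\bd)=\supp(\bd)$, and the Euler form is linear, so $\langle m\bd,\bs_i\rangle = m\langle \bd,\bs_i\rangle$ and $\langle \bs_i, m\bd\rangle = m\langle \bs_i,\bd\rangle$. The sign conditions therefore hold for $m\bd$ if and only if they hold for $\bd$.

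\emph{Inclusion $R\cup M\subseteq S$.} Real simples are generators of the first kind by definition. An imaginary simple from the third case of the theorem is, by that theorem, strongly connected and satisfies both sign conditions, so it and all its multiples are generators of the second kind.

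For an imaginary simple from the second case, we have $\supp(\bd)=\tilde A_{n-1}$ and $\bd=(1,\dots,1)$ on the support. The support is an oriented cycle, hence strongly connected. We check the sign conditions vertex by vertex; for this we use that a cycle vertex has no loops.
\begin{itemize}
\item If $i$ lies on the cycle, then $\langle \bd,\bs_i\rangle = 1-1 = 0$, since exactly one arrow of the support ends at $i$, and dually $\langle \bs_i,\bd\rangle=0$.
\item If $i$ lies outside the support, both values are $-(\text{number of arrows between } i \text{ and the support in the relevant direction})\le 0$.
\end{itemize}
The degenerate case $n=1$ needs a separate check. There $\tilde A_0$ is a single vertex with one loop, and $\langle \bs_i,\bs_i\rangle = 1-1=0$.

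\emph{Inclusion $S\subseteq R\cup M$.} Generators of the first kind are real simples. Now let $\bd$ be a generator of the second kind, and let $m=\gcd(\bd)$ and $\bd'=\bd/m$. By the scaling observation, $\bd'$ is again strongly connected with the sign conditions. If $\supp(\bd')$ is noncyclic, then $\bd'$ is an imaginary simple by the third case of the Le Bruyn--Procesi theorem, and we are done.

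If $\supp(\bd')$ is a cycle $\tilde A_{k-1}$, we must show $\bd'=(1,\dots,1)$. The condition $\langle \bd',\bs_j\rangle\le 0$ at a cycle vertex $j$ with predecessor $j-1$ reads $d'_j-d'_{j-1}\le 0$. Going around the cycle, all entries must be equal. Since $\gcd(\bd')=1$, it follows that $\bd'=(1,\dots,1)$, the second case of the theorem.

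A support consisting of one vertex with no loop is impossible here: then $\langle \bd,\bs_j\rangle = d_j>0$, so the generator is already covered by the first kind.

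\emph{Main obstacle.} The main subtlety is bookkeeping around the theorem's conventions rather than any deep step. We must make sure that one-vertex supports with loops are treated consistently: a single loop is $\tilde A_0$, several loops are noncyclic. We must also confirm that no real simple with a multiple greater than $1$ sneaks in. Multiples of real simples are excluded because a loopless vertex violates the sign condition for every $m\ge1$. I would check these edge cases explicitly, and then the two inclusions above complete the proof.
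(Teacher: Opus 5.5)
Your proof is correct and follows essentially the same route as the paper: you use scaling invariance of the support and the sign conditions, handle real simples by the first kind of generator, and split the remaining generators into noncyclic supports (the third Le Bruyn--Procesi case) and cyclic supports $\tilde A_{k-1}$, where the sign conditions force a constant vector. Your version is slightly more careful than the paper's in two places: you prove constancy on the cycle via the chain $d'_j\le d'_{j-1}$, which the paper only asserts, and for vertices $i$ outside the support of a cycle you correctly record $\langle \bd,\bs_i\rangle\le 0$ and $\langle \bs_i,\bd\rangle\le 0$, whereas the paper states equality with $0$ for all $i$.
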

\begin{proof}
For a real simple $\bd$, $e_{\bd}$ is a generator of $\mathfrak{g}_Q^{nil}$. Lets assume $\bd$ is a multiple of a imaginary simple. This means, that either $\supp(\bd)$ is the extended Dynkin diagram $\tilde{A}_m$ for some $m$ and $d_{|\supp(\bd)} =(k, \dots k)$ for $k \in \mathbb{N}$ or $supp(\bd)$ is a noncyclic strongly connected subquiver and $\langle \bd, \bs_i \rangle \leq 0$ and $\langle \bs_i, \bd \rangle \leq 0$ for all $s_i \in I$. (For a dimension vector $\bd$ such that $\supp(\bd)$ is noncyclic strongly connected the same is true for $k\bd$ with $k \in \mathbb{N}$. Furthermore, if $\langle \bs_i, \bd \rangle \leq 0$ and $\langle \bd, \bs_i \rangle \leq 0$ for all vertices $i$, we have $\langle \bs_i, k\bd \rangle = k \langle \bs_i, \bd \rangle \leq 0$ and $\langle k\bd, \bs_i \rangle = k \langle \bd, \bs_i \rangle\leq 0$ for all vertices $i$.) If $\bd$ is of the second form, then $e_{\bd}$ is obviously a generator of $\mathfrak{g}_Q^{nil}$. Moreover, if $\bd$ is of the first form we have $\langle s_i, d\rangle = \langle d, s_i \rangle =0$, so $e_{\bd}$ is also a generator of $\mathfrak{g}_Q^{nil}$. \\
Now we assume that $e_{\bd}$ is a generator of $\mathfrak{g}_Q^{nil}$, such that $\bd$ is not a real simple. Then $\supp(\bd)$ is strongly connected and $\langle \bd, \bs_i \rangle \leq 0$ and $\langle \bs_i, \bd \rangle \leq 0$ for all $\bs_i \in I$. If $\supp(\bd)$ is moreover also noncyclic strongly connected, then it is an imaginary simple. Otherwise, $\supp(\bd)$ is the extended Dynkin quiver $\tilde{A}_m$ for some $m$. For the quiver $\tilde{A}_m$ the only dimension vectors $\bd$ with $\langle s_i, d\rangle \leq 0$ and $\langle d, s_i \rangle  \leq 0$ for all $\bs_i \in I$ are the ones of the form $(k, \dots, k)$ for $k \in \mathbb{N}$, so exactly the multiples of the dimension vectors of a simple representation corresponding to $\tilde{A}_n$.
\end{proof}

\section{Interpretation in the Context of BPS Theory}
The following observations are due to B. Davison and L. Hennecart (private communication):\\
In their paper \cite{BPS} Davison, Hennecart and Schlegel Mejia construct in Corollary 1.8. a PBW isomorphism, that relates a certain generalized Kac-Moody Lie algebra $\mathfrak{n}^{BPS, \mathcal{SN},+}_A$ to the Borel-Moore homology of the stack of seminilpotent representations of the preprojective algebra $\Pi_Q$. Since $\mathfrak{n}^{BPS, \mathcal{SN},+}_A$ is (the positive part of) a generalized Kac-Moody Lie algebra, it is entirely determined by its generators, which are given by the $i^!$-restriction of a certain perverse sheaf $\mathcal{G}$ for the inclusion $i:\mathcal{M}_Q \hookrightarrow \mathcal{M}_{\Pi_Q}$ of affine coarse moduli spaces. Using a result of \cite{pointsnilpotentvarities} this isomorphism yields that the generating function of the $d$th graded part of $\mathfrak{n}^{BPS, \mathcal{SN},+}_A$ is the nilpotent Kac polynomial $A_d^{nil}(Q)$ and the constant term is obtained by restricting to the zeroth cohomology. Altogether, Conjecture \ref{conjecture} is equivalent to:

\begin{conjecture}
    The dimension of the derived global sections of $H^0(\mathcal{M}_Q, i^!\mathcal{G}_{\bd})$ is $1$ if $\bd$ satisfies one of the conditions in Definition \ref{def nilp Kac-moody} and $0$ otherwise.
\end{conjecture}

\section{Appendix}
The following tables contain the values of the constant term of the nilpotent Kac polynomial for selected dimension vectors $\bd$ / the dimension of the nilpotent Kac-Moody algebra in degree $\bd$ of the tested quivers listed above. The tables are sorted in ascending order by the number of vertices.
\subsection*{Quivers with $1$ Vertex}

\begin{longtable}{|c|c|c|c|c|c|c|}
\caption{one-vertex Quivers}\label{tab:1vtx}\\
\hline
$\bd$ & $[0]$ & $[1]$ & $[2]$ & $[3]$ & $[4]$ & $[5]$ \\ \hline\hline
\endfirsthead
\multicolumn{7}{l}{\small\itshape Table~\ref{tab:1vtx} continued from previous page} \\ \hline
$\bd$ & $[0]$ & $[1]$ & $[2]$ & $[3]$ & $[4]$ & $[5]$ \\ \hline\hline
\endhead
\hline
\endlastfoot
$(0)$ & 0 & 0 & 0 & 0 & 0 & 0 \\ \hline
$(1)$ & 1 & 1 & 1 & 1 & 1 & 1 \\ \hline
$(2)$ & 0 & 1 & 1 & 1 & 1 & 1 \\ \hline
$(3)$ & 0 & 1 & 2 & 2 & 2 & 2 \\ \hline
$(4)$ & 0 & 1 & 3 & 3 & 3 & 3 \\ \hline
$(5)$ & 0 & 1 & 6 & 6 & 6 & 6 \\ \hline
$(6)$ & 0 & 1 & 9 & 9 & 9 & 9 \\ \hline
$(7)$ & 0 & 1 & 18 & 18 & 18 & 18 \\ \hline
$(8)$ & 0 & 1 & 30 & 30 & 30 & 30 \\ \hline
$(9)$ & 0 & 1 & 56 & 56 & 56 & 56 \\ \hline
$(10)$ & 0 & 1 & 99 & 99 & 99 & 99 \\ \hline
\end{longtable}

\subsection*{Quivers with $2$ Vertices}

\noindent Quivers (adjacency matrices):\\[3pt]
\renewcommand{\arraystretch}{1.4}
\begin{tabular}{lllll}
$Q_{1}=\left(\begin{smallmatrix}1&1\\1&0\end{smallmatrix}\right)$ & $Q_{2}=\left(\begin{smallmatrix}0&2\\2&0\end{smallmatrix}\right)$ & $Q_{3}=\left(\begin{smallmatrix}1&2\\1&0\end{smallmatrix}\right)$ & $Q_{4}=\left(\begin{smallmatrix}3&2\\1&0\end{smallmatrix}\right)$ & $Q_{5}=\left(\begin{smallmatrix}1&2\\2&0\end{smallmatrix}\right)$ \\[5pt]
$Q_{6}=\left(\begin{smallmatrix}1&2\\3&0\end{smallmatrix}\right)$ & $Q_{7}=\left(\begin{smallmatrix}0&2\\1&0\end{smallmatrix}\right)$ & $Q_{8}=\left(\begin{smallmatrix}0&1\\1&0\end{smallmatrix}\right)$ & $Q_{9}=\left(\begin{smallmatrix}0&1\\0&0\end{smallmatrix}\right)$ & $Q_{10}=\left(\begin{smallmatrix}0&3\\1&0\end{smallmatrix}\right)$ \\[5pt]
$Q_{11}=\left(\begin{smallmatrix}1&2\\2&1\end{smallmatrix}\right)$ & $Q_{12}=\left(\begin{smallmatrix}0&3\\2&0\end{smallmatrix}\right)$ & $Q_{13}=\left(\begin{smallmatrix}0&3\\3&0\end{smallmatrix}\right)$ & $Q_{14}=\left(\begin{smallmatrix}1&1\\0&1\end{smallmatrix}\right)$ & $Q_{15}=\left(\begin{smallmatrix}1&1\\1&1\end{smallmatrix}\right)$ \\[5pt]
$Q_{16}=\left(\begin{smallmatrix}2&1\\1&2\end{smallmatrix}\right)$ & $Q_{17}=\left(\begin{smallmatrix}1&1\\0&0\end{smallmatrix}\right)$ & $Q_{18}=\left(\begin{smallmatrix}1&0\\0&1\end{smallmatrix}\right)$ & $Q_{19}=\left(\begin{smallmatrix}2&1\\1&1\end{smallmatrix}\right)$ &  \\[5pt]
\end{tabular}
\renewcommand{\arraystretch}{1.0}
 
\small
\setlength{\tabcolsep}{1.5pt}
\begin{longtable}{|c|c|c|c|c|c|c|c|c|c|c|c|c|c|c|c|c|c|c|c|}
\caption{two-vertex Quivers}\label{tab:2vtx}\\
\hline
$\bd$ & $Q_{1}$ & $Q_{2}$ & $Q_{3}$ & $Q_{4}$ & $Q_{5}$ & $Q_{6}$ & $Q_{7}$ & $Q_{8}$ & $Q_{9}$ & $Q_{10}$ & $Q_{11}$ & $Q_{12}$ & $Q_{13}$ & $Q_{14}$ & $Q_{15}$ & $Q_{16}$ & $Q_{17}$ & $Q_{18}$ & $Q_{19}$ \\ \hline\hline
\endfirsthead
\multicolumn{20}{l}{\small\itshape Table~\ref{tab:2vtx} continued from previous page} \\ \hline
$\bd$ & $Q_{1}$ & $Q_{2}$ & $Q_{3}$ & $Q_{4}$ & $Q_{5}$ & $Q_{6}$ & $Q_{7}$ & $Q_{8}$ & $Q_{9}$ & $Q_{10}$ & $Q_{11}$ & $Q_{12}$ & $Q_{13}$ & $Q_{14}$ & $Q_{15}$ & $Q_{16}$ & $Q_{17}$ & $Q_{18}$ & $Q_{19}$ \\ \hline\hline
\endhead
\hline \multicolumn{20}{r}{\small\itshape continued on next page} \\
\endfoot
\hline
\endlastfoot
\multicolumn{20}{|l|}{\textbf{total degree }$|\bd|=0$} \\ \hline
$(0,0)$ & 0 & 0 & 0 & 0 & 0 & 0 & 0 & 0 & 0 & 0 & 0 & 0 & 0 & 0 & 0 & 0 & 0 & 0 & 0 \\ \hline
\multicolumn{20}{|l|}{\textbf{total degree }$|\bd|=1$} \\ \hline
$(1,0)$ & 1 & 1 & 1 & 1 & 1 & 1 & 1 & 1 & 1 & 1 & 1 & 1 & 1 & 1 & 1 & 1 & 1 & 1 & 1 \\ \hline
$(0,1)$ & 1 & 1 & 1 & 1 & 1 & 1 & 1 & 1 & 1 & 1 & 1 & 1 & 1 & 1 & 1 & 1 & 1 & 1 & 1 \\ \hline
\multicolumn{20}{|l|}{\textbf{total degree }$|\bd|=2$} \\ \hline
$(2,0)$ & 1 & 0 & 1 & 1 & 1 & 1 & 0 & 0 & 0 & 0 & 1 & 0 & 0 & 1 & 1 & 1 & 1 & 1 & 1 \\ \hline
$(1,1)$ & 2 & 2 & 2 & 2 & 2 & 2 & 2 & 2 & 1 & 2 & 2 & 2 & 2 & 1 & 2 & 2 & 1 & 0 & 2 \\ \hline
$(0,2)$ & 0 & 0 & 0 & 0 & 0 & 0 & 0 & 0 & 0 & 0 & 1 & 0 & 0 & 1 & 1 & 1 & 0 & 1 & 1 \\ \hline
\multicolumn{20}{|l|}{\textbf{total degree }$|\bd|=3$} \\ \hline
$(3,0)$ & 1 & 0 & 1 & 2 & 1 & 1 & 0 & 0 & 0 & 0 & 1 & 0 & 0 & 1 & 1 & 2 & 1 & 1 & 2 \\ \hline
$(2,1)$ & 4 & 3 & 4 & 4 & 4 & 4 & 2 & 1 & 0 & 2 & 4 & 3 & 3 & 2 & 4 & 4 & 2 & 0 & 4 \\ \hline
$(1,2)$ & 1 & 3 & 2 & 2 & 3 & 3 & 2 & 1 & 0 & 2 & 4 & 3 & 3 & 2 & 4 & 4 & 0 & 0 & 4 \\ \hline
$(0,3)$ & 0 & 0 & 0 & 0 & 0 & 0 & 0 & 0 & 0 & 0 & 1 & 0 & 0 & 1 & 1 & 2 & 0 & 1 & 1 \\ \hline
\multicolumn{20}{|l|}{\textbf{total degree }$|\bd|=4$} \\ \hline
$(4,0)$ & 1 & 0 & 1 & 3 & 1 & 1 & 0 & 0 & 0 & 0 & 1 & 0 & 0 & 1 & 1 & 3 & 1 & 1 & 3 \\ \hline
$(3,1)$ & 7 & 2 & 7 & 8 & 7 & 7 & 1 & 0 & 0 & 2 & 7 & 3 & 4 & 3 & 7 & 8 & 3 & 0 & 8 \\ \hline
$(2,2)$ & 5 & 6 & 6 & 6 & 7 & 7 & 4 & 2 & 0 & 4 & 9 & 6 & 6 & 4 & 9 & 9 & 1 & 0 & 9 \\ \hline
$(1,3)$ & 0 & 2 & 1 & 1 & 2 & 3 & 1 & 0 & 0 & 2 & 7 & 3 & 4 & 3 & 7 & 8 & 0 & 0 & 7 \\ \hline
$(0,4)$ & 0 & 0 & 0 & 0 & 0 & 0 & 0 & 0 & 0 & 0 & 1 & 0 & 0 & 1 & 1 & 3 & 0 & 1 & 1 \\ \hline
\multicolumn{20}{|l|}{\textbf{total degree }$|\bd|=5$} \\ \hline
$(5,0)$ & 1 & 0 & 1 & 6 & 1 & 1 & 0 & 0 & 0 & 0 & 1 & 0 & 0 & 1 & 1 & 6 & 1 & 1 & 6 \\ \hline
$(4,1)$ & 12 & 1 & 12 & 16 & 12 & 12 & 0 & 0 & 0 & 1 & 12 & 2 & 3 & 5 & 12 & 16 & 5 & 0 & 16 \\ \hline
$(3,2)$ & 15 & 11 & 17 & 18 & 19 & 19 & 6 & 1 & 0 & 7 & 22 & 12 & 13 & 8 & 22 & 24 & 3 & 0 & 24 \\ \hline
$(2,3)$ & 4 & 11 & 8 & 8 & 12 & 13 & 6 & 1 & 0 & 7 & 22 & 12 & 13 & 8 & 22 & 24 & 0 & 0 & 22 \\ \hline
$(1,4)$ & 0 & 1 & 0 & 0 & 1 & 2 & 0 & 0 & 0 & 1 & 12 & 2 & 3 & 5 & 12 & 16 & 0 & 0 & 12 \\ \hline
$(0,5)$ & 0 & 0 & 0 & 0 & 0 & 0 & 0 & 0 & 0 & 0 & 1 & 0 & 0 & 1 & 1 & 6 & 0 & 1 & 1 \\ \hline
\multicolumn{20}{|l|}{\textbf{total degree }$|\bd|=6$} \\ \hline
$(5,1)$ & 19 & 0 & 19 & 32 & 19 & 19 & 0 & 0 & 0 & 0 & 19 & 1 & 2 & 7 & 19 & 32 & 7 & 0 & 32 \\ \hline
$(4,2)$ & 33 & 12 & 36 & 42 & 39 & 39 & 4 & 0 & 0 & 7 & 44 & 15 & 18 & 14 & 44 & 54 & 6 & 0 & 54 \\ \hline
$(3,3)$ & 19 & 27 & 28 & 29 & 37 & 39 & 14 & 2 & 0 & 16 & 59 & 29 & 31 & 17 & 59 & 66 & 1 & 0 & 62 \\ \hline
$(2,4)$ & 1 & 12 & 6 & 6 & 13 & 16 & 4 & 0 & 0 & 7 & 44 & 15 & 18 & 14 & 44 & 54 & 0 & 0 & 44 \\ \hline
$(1,5)$ & 0 & 0 & 0 & 0 & 0 & 1 & 0 & 0 & 0 & 0 & 19 & 1 & 2 & 7 & 19 & 32 & 0 & 0 & 19 \\ \hline
\multicolumn{20}{|l|}{\textbf{total degree }$|\bd|=7$} \\ \hline
$(5,2)$ & 73 & 11 & 78 & 104 & 83 & 83 & 2 & 0 & 0 & 7 & 90 & 18 & 25 & 25 & 90 & 128 & 13 & 0 & 128 \\ \hline
$(4,3)$ & 63 & 50 & 83 & 90 & 103 & 106 & 21 & 1 & 0 & 27 & 147 & 57 & 64 & 37 & 147 & 176 & 5 & 0 & 168 \\ \hline
$(3,4)$ & 15 & 50 & 36 & 37 & 61 & 69 & 21 & 1 & 0 & 27 & 147 & 57 & 64 & 37 & 147 & 176 & 0 & 0 & 152 \\ \hline
$(2,5)$ & 0 & 11 & 4 & 4 & 12 & 19 & 2 & 0 & 0 & 7 & 90 & 18 & 25 & 25 & 90 & 128 & 0 & 0 & 90 \\ \hline
\multicolumn{20}{|l|}{\textbf{total degree }$|\bd|=8$} \\ \hline
$(5,3)$ & 171 & 70 & 212 & 252 & 253 & 258 & 21 & 0 & 0 & 36 & 337 & 90 & 110 & 72 & 337 & 448 & 13 & 0 & 432 \\ \hline
$(4,4)$ & 74 & 124 & 135 & 142 & 204 & 222 & 48 & 2 & 0 & 61 & 412 & 141 & 160 & 85 & 412 & 511 & 1 & 0 & 455 \\ \hline
$(3,5)$ & 7 & 70 & 36 & 37 & 81 & 103 & 21 & 0 & 0 & 36 & 337 & 90 & 110 & 72 & 337 & 448 & 0 & 0 & 344 \\ \hline
\multicolumn{20}{|l|}{\textbf{total degree }$|\bd|=9$} \\ \hline
$(5,4)$ & 277 & 248 & 437 & 488 & 612 & 650 & 80 & 1 & 0 & 113 & 1087 & 297 & 350 & 189 & 1087 & 1440 & 7 & 0 & 1312 \\ \hline
$(4,5)$ & 63 & 248 & 184 & 191 & 350 & 413 & 80 & 1 & 0 & 113 & 1087 & 297 & 350 & 189 & 1087 & 1440 & 0 & 0 & 1168 \\ \hline
\multicolumn{20}{|l|}{\textbf{total degree }$|\bd|=10$} \\ \hline
$(5,5)$ & 325 & 636 & 725 & 782 & 1238 & 1398 & 190 & 2 & 0 & 266 & 3175 & 766 & 912 & 454 & 3175 & 4358 & 1 & 0 & 3670 \\ \hline
\end{longtable}

\subsection*{Quivers with 3 Vertices}

\noindent Quivers (adjacency matrices):\\[4pt]
\renewcommand{\arraystretch}{1.6}
\begin{tabular}{llll}
$Q_{1}=\left(\begin{smallmatrix}1&1&0\\0&1&1\\0&0&1\end{smallmatrix}\right)$ & $Q_{2}=\left(\begin{smallmatrix}1&1&0\\0&1&1\\0&1&1\end{smallmatrix}\right)$ & $Q_{3}=\left(\begin{smallmatrix}1&1&0\\1&1&1\\0&1&1\end{smallmatrix}\right)$ & $Q_{4}=\left(\begin{smallmatrix}1&1&1\\1&1&1\\0&1&1\end{smallmatrix}\right)$ \\[4pt]
$Q_{5}=\left(\begin{smallmatrix}1&1&1\\1&1&1\\1&1&1\end{smallmatrix}\right)$ & $Q_{6}=\left(\begin{smallmatrix}2&1&0\\0&2&1\\0&0&2\end{smallmatrix}\right)$ & $Q_{7}=\left(\begin{smallmatrix}2&1&0\\0&2&1\\0&1&2\end{smallmatrix}\right)$ & $Q_{8}=\left(\begin{smallmatrix}2&1&0\\1&2&1\\0&1&2\end{smallmatrix}\right)$ \\[4pt]
$Q_{9}=\left(\begin{smallmatrix}2&1&1\\1&2&1\\1&1&2\end{smallmatrix}\right)$ & $Q_{10}=\left(\begin{smallmatrix}2&1&0\\1&1&1\\0&1&2\end{smallmatrix}\right)$ & $Q_{11}=\left(\begin{smallmatrix}2&1&0\\1&2&1\\0&1&1\end{smallmatrix}\right)$ & $Q_{12}=\left(\begin{smallmatrix}1&1&0\\0&2&1\\0&1&1\end{smallmatrix}\right)$ \\[4pt]
$Q_{13}=\left(\begin{smallmatrix}1&1&0\\0&1&1\\0&1&2\end{smallmatrix}\right)$ & $Q_{14}=\left(\begin{smallmatrix}1&1&0\\1&2&1\\0&1&1\end{smallmatrix}\right)$ &  &  \\[4pt]
\end{tabular}
\renewcommand{\arraystretch}{1.0}

\small
\setlength{\tabcolsep}{3pt}
\begin{longtable}{|l|l|l|l|l|l|l|l|l|l|l|l|l|l|l|}
\caption{three-vertex Quivers}\label{tab:3vtx}\\
\hline
$\bd$ & $Q_{1}$ & $Q_{2}$ & $Q_{3}$ & $Q_{4}$ & $Q_{5}$ & $Q_{6}$ & $Q_{7}$ & $Q_{8}$ & $Q_{9}$ & $Q_{10}$ & $Q_{11}$ & $Q_{12}$ & $Q_{13}$ & $Q_{14}$ \\ \hline\hline
\endfirsthead
\multicolumn{15}{l}{\small\itshape Table~\ref{tab:3vtx} continued from previous page} \\ \hline
$\bd$ & $Q_{1}$ & $Q_{2}$ & $Q_{3}$ & $Q_{4}$ & $Q_{5}$ & $Q_{6}$ & $Q_{7}$ & $Q_{8}$ & $Q_{9}$ & $Q_{10}$ & $Q_{11}$ & $Q_{12}$ & $Q_{13}$ & $Q_{14}$ \\ \hline\hline
\endhead
\hline \multicolumn{15}{r}{\small\itshape continued on next page} \\
\endfoot
\hline
\endlastfoot
\multicolumn{15}{|l|}{\textbf{total degree }$|\bd|=0$} \\ \hline
$(0,0,0)$ & 0 & 0 & 0 & 0 & 0 & 0 & 0 & 0 & 0 & 0 & 0 & 0 & 0 & 0 \\ \hline
\multicolumn{15}{|l|}{\textbf{total degree }$|\bd|=1$} \\ \hline
$(1,0,0)$ & 1 & 1 & 1 & 1 & 1 & 1 & 1 & 1 & 1 & 1 & 1 & 1 & 1 & 1 \\ \hline
$(0,1,0)$ & 1 & 1 & 1 & 1 & 1 & 1 & 1 & 1 & 1 & 1 & 1 & 1 & 1 & 1 \\ \hline
$(0,0,1)$ & 1 & 1 & 1 & 1 & 1 & 1 & 1 & 1 & 1 & 1 & 1 & 1 & 1 & 1 \\ \hline
\multicolumn{15}{|l|}{\textbf{total degree }$|\bd|=2$} \\ \hline
$(2,0,0)$ & 1 & 1 & 1 & 1 & 1 & 1 & 1 & 1 & 1 & 1 & 1 & 1 & 1 & 1 \\ \hline
$(1,1,0)$ & 1 & 1 & 2 & 2 & 2 & 1 & 1 & 2 & 2 & 2 & 2 & 1 & 1 & 2 \\ \hline
$(1,0,1)$ & 0 & 0 & 0 & 1 & 2 & 0 & 0 & 0 & 2 & 0 & 0 & 0 & 0 & 0 \\ \hline
$(0,2,0)$ & 1 & 1 & 1 & 1 & 1 & 1 & 1 & 1 & 1 & 1 & 1 & 1 & 1 & 1 \\ \hline
$(0,1,1)$ & 1 & 2 & 2 & 2 & 2 & 1 & 2 & 2 & 2 & 2 & 2 & 2 & 2 & 2 \\ \hline
$(0,0,2)$ & 1 & 1 & 1 & 1 & 1 & 1 & 1 & 1 & 1 & 1 & 1 & 1 & 1 & 1 \\ \hline
\multicolumn{15}{|l|}{\textbf{total degree }$|\bd|=3$} \\ \hline
$(3,0,0)$ & 1 & 1 & 1 & 1 & 1 & 2 & 2 & 2 & 2 & 2 & 2 & 1 & 1 & 1 \\ \hline
$(2,1,0)$ & 2 & 2 & 4 & 4 & 4 & 2 & 2 & 4 & 4 & 4 & 4 & 2 & 2 & 4 \\ \hline
$(2,0,1)$ & 0 & 0 & 0 & 2 & 4 & 0 & 0 & 0 & 4 & 0 & 0 & 0 & 0 & 0 \\ \hline
$(1,2,0)$ & 2 & 2 & 4 & 4 & 4 & 2 & 2 & 4 & 4 & 4 & 4 & 2 & 2 & 4 \\ \hline
$(1,1,1)$ & 1 & 2 & 4 & 5 & 6 & 1 & 2 & 4 & 6 & 4 & 4 & 2 & 2 & 4 \\ \hline
$(1,0,2)$ & 0 & 0 & 0 & 2 & 4 & 0 & 0 & 0 & 4 & 0 & 0 & 0 & 0 & 0 \\ \hline
$(0,3,0)$ & 1 & 1 & 1 & 1 & 1 & 2 & 2 & 2 & 2 & 1 & 2 & 2 & 1 & 2 \\ \hline
$(0,2,1)$ & 2 & 4 & 4 & 4 & 4 & 2 & 4 & 4 & 4 & 4 & 4 & 4 & 4 & 4 \\ \hline
$(0,1,2)$ & 2 & 4 & 4 & 4 & 4 & 2 & 4 & 4 & 4 & 4 & 4 & 4 & 4 & 4 \\ \hline
$(0,0,3)$ & 1 & 1 & 1 & 1 & 1 & 2 & 2 & 2 & 2 & 2 & 1 & 1 & 2 & 1 \\ \hline
\multicolumn{15}{|l|}{\textbf{total degree }$|\bd|=4$} \\ \hline
$(3,1,0)$ & 3 & 3 & 7 & 7 & 7 & 4 & 4 & 8 & 8 & 8 & 8 & 3 & 3 & 7 \\ \hline
$(3,0,1)$ & 0 & 0 & 0 & 3 & 7 & 0 & 0 & 0 & 8 & 0 & 0 & 0 & 0 & 0 \\ \hline
$(2,2,0)$ & 4 & 4 & 9 & 9 & 9 & 4 & 4 & 9 & 9 & 9 & 9 & 4 & 4 & 9 \\ \hline
$(2,1,1)$ & 2 & 4 & 8 & 12 & 16 & 2 & 4 & 8 & 16 & 8 & 8 & 4 & 4 & 8 \\ \hline
$(2,0,2)$ & 0 & 0 & 0 & 4 & 9 & 0 & 0 & 0 & 9 & 0 & 0 & 0 & 0 & 0 \\ \hline
$(1,3,0)$ & 3 & 3 & 7 & 7 & 7 & 4 & 4 & 8 & 8 & 7 & 8 & 4 & 3 & 8 \\ \hline
$(1,2,1)$ & 3 & 6 & 12 & 14 & 16 & 3 & 6 & 12 & 16 & 12 & 12 & 6 & 6 & 12 \\ \hline
$(1,1,2)$ & 2 & 4 & 8 & 12 & 16 & 2 & 4 & 8 & 16 & 8 & 8 & 4 & 4 & 8 \\ \hline
$(1,0,3)$ & 0 & 0 & 0 & 3 & 7 & 0 & 0 & 0 & 8 & 0 & 0 & 0 & 0 & 0 \\ \hline
$(0,3,1)$ & 3 & 7 & 7 & 7 & 7 & 4 & 8 & 8 & 8 & 7 & 8 & 8 & 7 & 8 \\ \hline
$(0,2,2)$ & 4 & 9 & 9 & 9 & 9 & 4 & 9 & 9 & 9 & 9 & 9 & 9 & 9 & 9 \\ \hline
$(0,1,3)$ & 3 & 7 & 7 & 7 & 7 & 4 & 8 & 8 & 8 & 8 & 7 & 7 & 8 & 7 \\ \hline
\multicolumn{15}{|l|}{\textbf{total degree }$|\bd|=5$} \\ \hline
$(3,2,0)$ & 8 & 8 & 22 & 22 & 22 & 10 & 10 & 24 & 24 & 24 & 24 & 8 & 8 & 22 \\ \hline
$(3,1,1)$ & 3 & 6 & 14 & 25 & 37 & 4 & 8 & 16 & 40 & 16 & 16 & 6 & 6 & 14 \\ \hline
$(3,0,2)$ & 0 & 0 & 0 & 8 & 22 & 0 & 0 & 0 & 24 & 0 & 0 & 0 & 0 & 0 \\ \hline
$(2,3,0)$ & 8 & 8 & 22 & 22 & 22 & 10 & 10 & 24 & 24 & 22 & 24 & 10 & 8 & 24 \\ \hline
$(2,2,1)$ & 7 & 14 & 32 & 42 & 52 & 7 & 14 & 32 & 52 & 32 & 32 & 14 & 14 & 32 \\ \hline
$(2,1,2)$ & 4 & 8 & 16 & 33 & 52 & 4 & 8 & 16 & 52 & 16 & 16 & 8 & 8 & 16 \\ \hline
$(2,0,3)$ & 0 & 0 & 0 & 8 & 22 & 0 & 0 & 0 & 24 & 0 & 0 & 0 & 0 & 0 \\ \hline
$(1,3,1)$ & 7 & 15 & 31 & 34 & 37 & 8 & 16 & 32 & 40 & 31 & 32 & 16 & 15 & 32 \\ \hline
$(1,2,2)$ & 7 & 16 & 32 & 42 & 52 & 7 & 16 & 32 & 52 & 32 & 32 & 16 & 16 & 32 \\ \hline
$(1,1,3)$ & 3 & 7 & 14 & 25 & 37 & 4 & 8 & 16 & 40 & 16 & 14 & 7 & 8 & 14 \\ \hline
$(0,3,2)$ & 8 & 22 & 22 & 22 & 22 & 10 & 24 & 24 & 24 & 22 & 24 & 24 & 22 & 24 \\ \hline
$(0,2,3)$ & 8 & 22 & 22 & 22 & 22 & 10 & 24 & 24 & 24 & 24 & 22 & 22 & 24 & 22 \\ \hline
\multicolumn{15}{|l|}{\textbf{total degree }$|\bd|=6$} \\ \hline
$(3,3,0)$ & 17 & 17 & 59 & 59 & 59 & 24 & 24 & 66 & 66 & 62 & 66 & 20 & 17 & 62 \\ \hline
$(3,2,1)$ & 13 & 26 & 74 & 108 & 144 & 16 & 32 & 80 & 152 & 80 & 80 & 26 & 26 & 74 \\ \hline
$(3,1,2)$ & 6 & 12 & 28 & 78 & 144 & 8 & 16 & 32 & 152 & 32 & 32 & 12 & 12 & 28 \\ \hline
$(3,0,3)$ & 0 & 0 & 0 & 17 & 59 & 0 & 0 & 0 & 66 & 0 & 0 & 0 & 0 & 0 \\ \hline
$(2,3,1)$ & 19 & 40 & 102 & 123 & 144 & 21 & 42 & 104 & 152 & 102 & 104 & 42 & 40 & 104 \\ \hline
$(2,2,2)$ & 16 & 37 & 86 & 140 & 199 & 16 & 37 & 86 & 199 & 86 & 86 & 37 & 37 & 86 \\ \hline
$(2,1,3)$ & 6 & 14 & 28 & 78 & 144 & 8 & 16 & 32 & 152 & 32 & 28 & 14 & 16 & 28 \\ \hline
$(1,3,2)$ & 19 & 50 & 102 & 123 & 144 & 21 & 52 & 104 & 152 & 102 & 104 & 52 & 50 & 104 \\ \hline
$(1,2,3)$ & 13 & 37 & 74 & 108 & 144 & 16 & 40 & 80 & 152 & 80 & 74 & 37 & 40 & 74 \\ \hline
$(0,3,3)$ & 17 & 59 & 59 & 59 & 59 & 24 & 66 & 66 & 66 & 62 & 62 & 62 & 62 & 62 \\ \hline
\multicolumn{15}{|l|}{\textbf{total degree }$|\bd|=7$} \\ \hline
$(3,3,1)$ & 42 & 87 & 285 & 374 & 466 & 53 & 106 & 304 & 504 & 300 & 304 & 90 & 87 & 288 \\ \hline
$(3,2,2)$ & 31 & 72 & 208 & 408 & 654 & 38 & 88 & 224 & 680 & 224 & 224 & 72 & 72 & 208 \\ \hline
$(3,1,3)$ & 9 & 21 & 49 & 207 & 466 & 16 & 32 & 64 & 504 & 64 & 56 & 21 & 24 & 49 \\ \hline
$(2,3,2)$ & 52 & 136 & 348 & 496 & 654 & 56 & 140 & 352 & 680 & 348 & 352 & 140 & 136 & 352 \\ \hline
$(2,2,3)$ & 31 & 89 & 208 & 408 & 654 & 38 & 96 & 224 & 680 & 224 & 208 & 89 & 96 & 208 \\ \hline
$(1,3,3)$ & 42 & 141 & 285 & 374 & 466 & 53 & 152 & 304 & 504 & 300 & 288 & 144 & 148 & 288 \\ \hline
\multicolumn{15}{|l|}{\textbf{total degree }$|\bd|=8$} \\ \hline
$(3,3,2)$ & 116 & 302 & 998 & 1654 & 2421 & 143 & 360 & 1056 & 2568 & 1048 & 1056 & 308 & 302 & 1004 \\ \hline
$(3,2,3)$ & 59 & 171 & 499 & 1284 & 2421 & 88 & 224 & 576 & 2568 & 576 & 536 & 171 & 184 & 499 \\ \hline
$(2,3,3)$ & 116 & 389 & 998 & 1654 & 2421 & 143 & 416 & 1056 & 2568 & 1048 & 1004 & 395 & 408 & 1004 \\ \hline
\multicolumn{15}{|l|}{\textbf{total degree }$|\bd|=9$} \\ \hline
$(3,3,3)$ & 261 & 876 & 2915 & 5976 & 10068 & 368 & 1082 & 3220 & 10790 & 3204 & 3068 & 885 & 918 & 2924 \\ \hline
\end{longtable}

\subsection*{Quiver with $4$ Vertices}

\noindent Quivers (adjacency matrices):\\[4pt]
\renewcommand{\arraystretch}{1.5}
\begin{tabular}{lll}
$Q_{1}=\left(\begin{smallmatrix}1&1&0&0\\1&1&1&0\\0&1&1&1\\0&0&1&1\end{smallmatrix}\right)$ & $Q_{2}=\left(\begin{smallmatrix}0&1&0&0\\1&0&1&0\\0&1&0&1\\0&0&1&0\end{smallmatrix}\right)$ & $Q_{3}=\left(\begin{smallmatrix}0&1&0&0\\1&0&1&0\\0&0&0&1\\0&0&1&0\end{smallmatrix}\right)$ \\[6pt]
$Q_{4}=\left(\begin{smallmatrix}0&1&0&0\\1&0&0&0\\0&0&0&1\\0&0&1&0\end{smallmatrix}\right)$ & $Q_{5}=\left(\begin{smallmatrix}1&1&0&0\\1&1&1&0\\0&0&1&1\\0&0&1&1\end{smallmatrix}\right)$ & $Q_{6}=\left(\begin{smallmatrix}1&1&0&0\\1&1&0&0\\0&0&1&1\\0&0&1&1\end{smallmatrix}\right)$ \\[6pt]
$Q_{7}=\left(\begin{smallmatrix}1&1&0&0\\1&1&1&0\\0&1&1&1\\0&0&0&1\end{smallmatrix}\right)$ & $Q_{8}=\left(\begin{smallmatrix}1&1&0&0\\1&1&1&0\\0&0&1&1\\0&0&0&1\end{smallmatrix}\right)$ & $Q_{9}=\left(\begin{smallmatrix}1&1&0&0\\0&1&1&0\\0&0&1&1\\0&0&0&1\end{smallmatrix}\right)$ \\[6pt]
\end{tabular}
\renewcommand{\arraystretch}{1.0}

\small
\setlength{\tabcolsep}{3pt}
\begin{longtable}{|l|l|l|l|l|l|l|l|l|l|}
\caption{four-vertex Quivers}\label{tab:4vtx}\\
\hline
$\bd$ & $Q_{1}$ & $Q_{2}$ & $Q_{3}$ & $Q_{4}$ & $Q_{5}$ & $Q_{6}$ & $Q_{7}$ & $Q_{8}$ & $Q_{9}$ \\ \hline\hline
\endfirsthead
\multicolumn{10}{l}{\small\itshape Table~\ref{tab:4vtx} continued from previous page} \\ \hline
$\bd$ & $Q_{1}$ & $Q_{2}$ & $Q_{3}$ & $Q_{4}$ & $Q_{5}$ & $Q_{6}$ & $Q_{7}$ & $Q_{8}$ & $Q_{9}$ \\ \hline\hline
\endhead
\hline \multicolumn{10}{r}{\small\itshape continued on next page} \\
\endfoot
\hline
\endlastfoot
\multicolumn{10}{|l|}{\textbf{total degree }$|\bd|=0$} \\ \hline
$(0,0,0,0)$ & 0 & 0 & 0 & 0 & 0 & 0 & 0 & 0 & 0 \\ \hline
\multicolumn{10}{|l|}{\textbf{total degree }$|\bd|=1$} \\ \hline
$(1,0,0,0)$ & 1 & 1 & 1 & 1 & 1 & 1 & 1 & 1 & 1 \\ \hline
$(0,1,0,0)$ & 1 & 1 & 1 & 1 & 1 & 1 & 1 & 1 & 1 \\ \hline
$(0,0,1,0)$ & 1 & 1 & 1 & 1 & 1 & 1 & 1 & 1 & 1 \\ \hline
$(0,0,0,1)$ & 1 & 1 & 1 & 1 & 1 & 1 & 1 & 1 & 1 \\ \hline
\multicolumn{10}{|l|}{\textbf{total degree }$|\bd|=2$} \\ \hline
$(2,0,0,0)$ & 1 & 0 & 0 & 0 & 1 & 1 & 1 & 1 & 1 \\ \hline
$(1,1,0,0)$ & 2 & 2 & 2 & 2 & 2 & 2 & 2 & 2 & 1 \\ \hline
$(1,0,1,0)$ & 0 & 0 & 0 & 0 & 0 & 0 & 0 & 0 & 0 \\ \hline
$(1,0,0,1)$ & 0 & 0 & 0 & 0 & 0 & 0 & 0 & 0 & 0 \\ \hline
$(0,2,0,0)$ & 1 & 0 & 0 & 0 & 1 & 1 & 1 & 1 & 1 \\ \hline
$(0,1,1,0)$ & 2 & 2 & 1 & 0 & 1 & 0 & 2 & 1 & 1 \\ \hline
$(0,1,0,1)$ & 0 & 0 & 0 & 0 & 0 & 0 & 0 & 0 & 0 \\ \hline
$(0,0,2,0)$ & 1 & 0 & 0 & 0 & 1 & 1 & 1 & 1 & 1 \\ \hline
$(0,0,1,1)$ & 2 & 2 & 2 & 2 & 2 & 2 & 1 & 1 & 1 \\ \hline
$(0,0,0,2)$ & 1 & 0 & 0 & 0 & 1 & 1 & 1 & 1 & 1 \\ \hline
\multicolumn{10}{|l|}{\textbf{total degree }$|\bd|=3$} \\ \hline
$(2,1,0,0)$ & 4 & 1 & 1 & 1 & 4 & 4 & 4 & 4 & 2 \\ \hline
$(2,0,1,0)$ & 0 & 0 & 0 & 0 & 0 & 0 & 0 & 0 & 0 \\ \hline
$(2,0,0,1)$ & 0 & 0 & 0 & 0 & 0 & 0 & 0 & 0 & 0 \\ \hline
$(1,2,0,0)$ & 4 & 1 & 1 & 1 & 4 & 4 & 4 & 4 & 2 \\ \hline
$(1,1,1,0)$ & 4 & 4 & 2 & 0 & 2 & 0 & 4 & 2 & 1 \\ \hline
$(1,1,0,1)$ & 0 & 0 & 0 & 0 & 0 & 0 & 0 & 0 & 0 \\ \hline
$(1,0,2,0)$ & 0 & 0 & 0 & 0 & 0 & 0 & 0 & 0 & 0 \\ \hline
$(1,0,1,1)$ & 0 & 0 & 0 & 0 & 0 & 0 & 0 & 0 & 0 \\ \hline
$(1,0,0,2)$ & 0 & 0 & 0 & 0 & 0 & 0 & 0 & 0 & 0 \\ \hline
$(0,2,1,0)$ & 4 & 1 & 0 & 0 & 2 & 0 & 4 & 2 & 2 \\ \hline
$(0,2,0,1)$ & 0 & 0 & 0 & 0 & 0 & 0 & 0 & 0 & 0 \\ \hline
$(0,1,2,0)$ & 4 & 1 & 0 & 0 & 2 & 0 & 4 & 2 & 2 \\ \hline
$(0,1,1,1)$ & 4 & 4 & 2 & 0 & 2 & 0 & 2 & 1 & 1 \\ \hline
$(0,1,0,2)$ & 0 & 0 & 0 & 0 & 0 & 0 & 0 & 0 & 0 \\ \hline
$(0,0,2,1)$ & 4 & 1 & 1 & 1 & 4 & 4 & 2 & 2 & 2 \\ \hline
$(0,0,1,2)$ & 4 & 1 & 1 & 1 & 4 & 4 & 2 & 2 & 2 \\ \hline
\multicolumn{10}{|l|}{\textbf{total degree }$|\bd|=4$} \\ \hline
$(2,2,0,0)$ & 9 & 2 & 2 & 2 & 9 & 9 & 9 & 9 & 4 \\ \hline
$(2,1,1,0)$ & 8 & 2 & 1 & 0 & 4 & 0 & 8 & 4 & 2 \\ \hline
$(2,1,0,1)$ & 0 & 0 & 0 & 0 & 0 & 0 & 0 & 0 & 0 \\ \hline
$(2,0,2,0)$ & 0 & 0 & 0 & 0 & 0 & 0 & 0 & 0 & 0 \\ \hline
$(2,0,1,1)$ & 0 & 0 & 0 & 0 & 0 & 0 & 0 & 0 & 0 \\ \hline
$(2,0,0,2)$ & 0 & 0 & 0 & 0 & 0 & 0 & 0 & 0 & 0 \\ \hline
$(1,2,1,0)$ & 12 & 7 & 2 & 0 & 6 & 0 & 12 & 6 & 3 \\ \hline
$(1,2,0,1)$ & 0 & 0 & 0 & 0 & 0 & 0 & 0 & 0 & 0 \\ \hline
$(1,1,2,0)$ & 8 & 2 & 0 & 0 & 4 & 0 & 8 & 4 & 2 \\ \hline
$(1,1,1,1)$ & 8 & 8 & 4 & 0 & 4 & 0 & 4 & 2 & 1 \\ \hline
$(1,1,0,2)$ & 0 & 0 & 0 & 0 & 0 & 0 & 0 & 0 & 0 \\ \hline
$(1,0,2,1)$ & 0 & 0 & 0 & 0 & 0 & 0 & 0 & 0 & 0 \\ \hline
$(1,0,1,2)$ & 0 & 0 & 0 & 0 & 0 & 0 & 0 & 0 & 0 \\ \hline
$(0,2,2,0)$ & 9 & 2 & 0 & 0 & 4 & 0 & 9 & 4 & 4 \\ \hline
$(0,2,1,1)$ & 8 & 2 & 0 & 0 & 4 & 0 & 4 & 2 & 2 \\ \hline
$(0,2,0,2)$ & 0 & 0 & 0 & 0 & 0 & 0 & 0 & 0 & 0 \\ \hline
$(0,1,2,1)$ & 12 & 7 & 2 & 0 & 6 & 0 & 6 & 3 & 3 \\ \hline
$(0,1,1,2)$ & 8 & 2 & 1 & 0 & 4 & 0 & 4 & 2 & 2 \\ \hline
$(0,0,2,2)$ & 9 & 2 & 2 & 2 & 9 & 9 & 4 & 4 & 4 \\ \hline
\multicolumn{10}{|l|}{\textbf{total degree }$|\bd|=5$} \\ \hline
$(2,2,1,0)$ & 32 & 13 & 5 & 0 & 16 & 0 & 32 & 16 & 7 \\ \hline
$(2,2,0,1)$ & 0 & 0 & 0 & 0 & 0 & 0 & 0 & 0 & 0 \\ \hline
$(2,1,2,0)$ & 16 & 1 & 0 & 0 & 8 & 0 & 16 & 8 & 4 \\ \hline
$(2,1,1,1)$ & 16 & 4 & 2 & 0 & 8 & 0 & 8 & 4 & 2 \\ \hline
$(2,1,0,2)$ & 0 & 0 & 0 & 0 & 0 & 0 & 0 & 0 & 0 \\ \hline
$(2,0,2,1)$ & 0 & 0 & 0 & 0 & 0 & 0 & 0 & 0 & 0 \\ \hline
$(2,0,1,2)$ & 0 & 0 & 0 & 0 & 0 & 0 & 0 & 0 & 0 \\ \hline
$(1,2,2,0)$ & 32 & 13 & 1 & 0 & 14 & 0 & 32 & 14 & 7 \\ \hline
$(1,2,1,1)$ & 24 & 14 & 4 & 0 & 12 & 0 & 12 & 6 & 3 \\ \hline
$(1,2,0,2)$ & 0 & 0 & 0 & 0 & 0 & 0 & 0 & 0 & 0 \\ \hline
$(1,1,2,1)$ & 24 & 14 & 4 & 0 & 12 & 0 & 12 & 6 & 3 \\ \hline
$(1,1,1,2)$ & 16 & 4 & 2 & 0 & 8 & 0 & 8 & 4 & 2 \\ \hline
$(1,0,2,2)$ & 0 & 0 & 0 & 0 & 0 & 0 & 0 & 0 & 0 \\ \hline
$(0,2,2,1)$ & 32 & 13 & 1 & 0 & 14 & 0 & 16 & 7 & 7 \\ \hline
$(0,2,1,2)$ & 16 & 1 & 0 & 0 & 8 & 0 & 8 & 4 & 4 \\ \hline
$(0,1,2,2)$ & 32 & 13 & 5 & 0 & 16 & 0 & 14 & 7 & 7 \\ \hline
\multicolumn{10}{|l|}{\textbf{total degree }$|\bd|=6$} \\ \hline
$(2,2,2,0)$ & 86 & 20 & 2 & 0 & 37 & 0 & 86 & 37 & 16 \\ \hline
$(2,2,1,1)$ & 64 & 26 & 10 & 0 & 32 & 0 & 32 & 16 & 7 \\ \hline
$(2,2,0,2)$ & 0 & 0 & 0 & 0 & 0 & 0 & 0 & 0 & 0 \\ \hline
$(2,1,2,1)$ & 48 & 7 & 2 & 0 & 24 & 0 & 24 & 12 & 6 \\ \hline
$(2,1,1,2)$ & 32 & 2 & 1 & 0 & 16 & 0 & 16 & 8 & 4 \\ \hline
$(2,0,2,2)$ & 0 & 0 & 0 & 0 & 0 & 0 & 0 & 0 & 0 \\ \hline
$(1,2,2,1)$ & 104 & 61 & 10 & 0 & 44 & 0 & 52 & 22 & 11 \\ \hline
$(1,2,1,2)$ & 48 & 7 & 2 & 0 & 24 & 0 & 24 & 12 & 6 \\ \hline
$(1,1,2,2)$ & 64 & 26 & 10 & 0 & 32 & 0 & 28 & 14 & 7 \\ \hline
$(0,2,2,2)$ & 86 & 20 & 2 & 0 & 37 & 0 & 37 & 16 & 16 \\ \hline
\multicolumn{10}{|l|}{\textbf{total degree }$|\bd|=7$} \\ \hline
$(2,2,2,1)$ & 288 & 100 & 21 & 0 & 120 & 0 & 144 & 60 & 26 \\ \hline
$(2,2,1,2)$ & 128 & 13 & 5 & 0 & 64 & 0 & 64 & 32 & 14 \\ \hline
$(2,1,2,2)$ & 128 & 13 & 5 & 0 & 64 & 0 & 56 & 28 & 14 \\ \hline
$(1,2,2,2)$ & 288 & 100 & 21 & 0 & 120 & 0 & 124 & 52 & 26 \\ \hline
\multicolumn{10}{|l|}{\textbf{total degree }$|\bd|=8$} \\ \hline
$(2,2,2,2)$ & 796 & 161 & 41 & 0 & 326 & 0 & 342 & 141 & 61 \\ \hline
\end{longtable}
\normalsize

\subsection*{Quivers with $5$ vertices}

\noindent Quiver (adjacency matrix):\\[4pt]
\renewcommand{\arraystretch}{1.5}
\begin{tabular}{l}
$Q_{1}=\left(\begin{smallmatrix}0&1&0&0&0\\0&0&1&0&0\\1&0&0&0&0\\0&0&0&0&1\\0&0&0&1&0\end{smallmatrix}\right)$ \\[6pt]
\end{tabular}
\renewcommand{\arraystretch}{1.0}

\small
\setlength{\tabcolsep}{6pt}
\begin{longtable}{|l|l|}
\caption{five-vertex Quivers}\label{tab:5vtx}\\
\hline
$\bd$ & $Q_{1}$ \\ \hline\hline
\endfirsthead
\multicolumn{2}{l}{\small\itshape Table~\ref{tab:5vtx} continued from previous page} \\ \hline
$\bd$ & $Q_{1}$ \\ \hline\hline
\endhead
\hline \multicolumn{2}{r}{\small\itshape continued on next page} \\
\endfoot
\hline
\endlastfoot
\multicolumn{2}{|l|}{\textbf{total degree }$|\bd|=0$} \\ \hline
$(0,0,0,0,0)$ & 0 \\ \hline
\multicolumn{2}{|l|}{\textbf{total degree }$|\bd|=1$} \\ \hline
$(1,0,0,0,0)$ & 1 \\ \hline
$(0,1,0,0,0)$ & 1 \\ \hline
$(0,0,1,0,0)$ & 1 \\ \hline
$(0,0,0,1,0)$ & 1 \\ \hline
$(0,0,0,0,1)$ & 1 \\ \hline
\multicolumn{2}{|l|}{\textbf{total degree }$|\bd|=2$} \\ \hline
$(1,1,0,0,0)$ & 1 \\ \hline
$(1,0,1,0,0)$ & 1 \\ \hline
$(1,0,0,1,0)$ & 0 \\ \hline
$(1,0,0,0,1)$ & 0 \\ \hline
$(0,1,1,0,0)$ & 1 \\ \hline
$(0,1,0,1,0)$ & 0 \\ \hline
$(0,1,0,0,1)$ & 0 \\ \hline
$(0,0,1,1,0)$ & 0 \\ \hline
$(0,0,1,0,1)$ & 0 \\ \hline
$(0,0,0,1,1)$ & 2 \\ \hline
\multicolumn{2}{|l|}{\textbf{total degree }$|\bd|=3$} \\ \hline
$(1,1,1,0,0)$ & 3 \\ \hline
$(1,1,0,1,0)$ & 0 \\ \hline
$(1,1,0,0,1)$ & 0 \\ \hline
$(1,0,1,1,0)$ & 0 \\ \hline
$(1,0,1,0,1)$ & 0 \\ \hline
$(1,0,0,1,1)$ & 0 \\ \hline
$(0,1,1,1,0)$ & 0 \\ \hline
$(0,1,1,0,1)$ & 0 \\ \hline
$(0,1,0,1,1)$ & 0 \\ \hline
$(0,0,1,1,1)$ & 0 \\ \hline
\multicolumn{2}{|l|}{\textbf{total degree }$|\bd|=4$} \\ \hline
$(1,1,1,1,0)$ & 0 \\ \hline
$(1,1,1,0,1)$ & 0 \\ \hline
$(1,1,0,1,1)$ & 0 \\ \hline
$(1,0,1,1,1)$ & 0 \\ \hline
$(0,1,1,1,1)$ & 0 \\ \hline
\multicolumn{2}{|l|}{\textbf{total degree }$|\bd|=5$} \\ \hline
$(1,1,1,1,1)$ & 0 \\ \hline
\end{longtable}
\normalsize

\bibliography{literature}
\end{document}